\title[Improved LMC for stochastic optimization via landscape modification]{Improved Langevin Monte Carlo for stochastic optimization via landscape modification}
\newcommand{\Rmnum}[1]{\expandafter\@slowromancap\romannumeral #1@}
\numberwithin{equation}{section}
\newtheorem{assumption}{Assumption}[section]
\begin{document}
\title{Improved Langevin Monte Carlo for stochastic optimization via landscape modification}
\maketitle

\begin{abstract}%
  Given a target function $H$ to minimize or a target Gibbs distribution $\pi_{\beta}^0 \propto e^{-\beta H}$ to sample from in the low temperature, in this paper we propose and analyze Langevin Monte Carlo (LMC) algorithms that run on an alternative landscape as specified by $H^f_{\beta,c,1}$ and target a modified Gibbs distribution $\pi^f_{\beta,c,1} \propto e^{-\beta H^f_{\beta,c,1}}$, where the landscape of $H^f_{\beta,c,1}$ is a transformed version of that of $H$ which depends on the parameters $f,\beta$ and $c$. While the original Log-Sobolev constant affiliated with $\pi^0_{\beta}$ exhibits exponential dependence on both $\beta$ and the energy barrier $M$ in the low temperature regime, with appropriate tuning of these parameters and subject to assumptions on $H$, we prove that the energy barrier of the transformed landscape is reduced which consequently leads to polynomial dependence on both $\beta$ and $M$ in the modified Log-Sobolev constant associated with $\pi^f_{\beta,c,1}$. This yield improved total variation mixing time bounds and improved convergence toward a global minimum of $H$. We stress that the technique developed in this paper is not only limited to LMC and is broadly applicable to other gradient-based optimization or sampling algorithms.
\end{abstract}

\begin{keywords}%
  Langevin Monte Carlo; landscape modification; stochastic optimization; spectral gap; Log-Sobolev constant; metastability; functional inequalities
\end{keywords}

\acks{The first author acknowledges the financial support of the startup grant from the National University of Singapore and Yale-NUS College. He also acknowledges the support from MOE Tier 1 Grant under the Data for Science and Science for Data collaborative scheme.}


\section{Introduction}

In this paper, we are primarily interested in designing improved Langevin Monte Carlo algorithms for sampling in the low temperature regime or stochastic optimization. These tasks arise naturally in a wide range of domains broadly related to machine learning including but not limited to Bayesian computation, computational physics and chemistry as well as theoretical computer science. Precisely, given a possibly non-convex target function $H:\mathbb{R}^d\rightarrow \mathbb{R}$ to minimize, we are interested in (approximately) sampling from the associated Gibbs distribution $\pi^0_{\beta} \propto e^{-\beta H}$, where $\beta$ is the inverse temperature parameter. At low enough temperature and subject to appropriate assumptions on $H$, as the Gibbs distribution $\pi^0_{\beta}$ concentrates on the set of global minima of $H$ owing to the Laplace method \parencite{hwang1980laplace}, sampling from $\pi^0_{\beta}$ thus serves the purpose of finding an approximate global minimizer of $H$.

Among various stochastic algorithms that have been developed to sample from $\pi^0_{\beta}$, there is a recent resurgence of interests on algorithms that are based upon the Langevin Monte Carlo (LMC) algorithm. Intuitively speaking, LMC can be understood as a version of gradient descent perturbed by Gaussian noise. It arises as the Euler-Maruyama discretization of the overdamped Langevin diffusion, which follows the iteration as
\begin{equation}
    \quad X_{k+1}=X_{k}-\eta \nabla H(X_k)+\sqrt{2\eta/\beta} \cdot Z_k,\label{langevin monte carlo}
\end{equation}
where $k$ is the iteration step, $\eta$ is the step size, and $\{Z_k\}_{k\geq 0}$ are i.i.d. standard Gaussian distributions. Various theoretical tools have been utilized to analyze the dynamics of LMC and its many variants, for instance non-asymptotic analysis and functional inequalities \parencite{raginsky2017non,chewi2021analysis, cheng2018underdamped}, hitting time analysis and Cheeger's constant \parencite{zhang2017hitting, zou2021faster, chen2020stationary}, to name but a few.

In the setting where $H$ is convex (or equivalently $\pi_{\beta}^0$ is log-concave), the analysis of LMC or its variants is relatively well-understood, see for example the papers \parencite{brosse2018promises, dalalyan2019user} and the references therein. On the other hand, when $H$ is non-convex, it is known that the underlying overdamped Langevin diffusion converges slowly towards stationarity. One of the main bottlenecks hindering convergence stem from the presence of the so-called energy barrier $M$ separating a local minimum and a global minimum, and it is shown in \parencite{gayrard2005metastability} that the spectral gap is of the order of $\Theta(e^{\beta M})$ at low temperature. In a broad sense, $M$ can be understood as a measure of the difficulty of the landscape $H$. Note that $M$ also appears in the design of cooling schedule in the context of simulated annealing, see for example \parencite{holley1988simulated, chiang1987diffusion,monmarche2018hypocoercivity}. This highlights the importance of designing accelerated or improved algorithms in such challenging yet important situation.

In the literature, the design of accelerated Langevin-based stochastic algorithms can be broadly categorized into two types, namely to propose new dynamics on the same landscape or to alter the underlying landscape or geometry. For the first type, various variants of the original LMC dynamics have been investigated such as mirror Langevin \parencite{zhang2020wasserstein}, simulated tempering Langevin \parencite{lee2018beyond}, or variance-reduced Langevin \parencite{kinoshita2022improved}. The path taken in this paper is in the spirit of the second type in which we propose a new modification of the landscape. Similar ideas that utilize landscape modification as an acceleration technique have been exploited in other related areas under possibly a different name, such as the energy transformation algorithm \parencite{catoni1998energy} by Catoni and quantum adiabatic annealing \parencite{wang2016quantum}. The primary motivation of the modification is taken from \parencite{fang1997improved}, in which an overdamped Langevin diffusion with state-dependent diffusion coefficient is investigated.

\subsection{Our contributions}\label{subsec:our contributions}
 Given a possibly non-convex target function $H$, the core idea of landscape modification rests on transforming $H$ into another function that enjoys superior mixing or optimization performance than on the original landscape as specified by $H$. More precisely, in this paper we propose a new transformation $H_{\beta,c,1}^f$ of $H$, which is defined to be
\begin{equation}
    {H}_{\beta, c, 1}^f(x):=H^*+\int_{H^*}^{H(x)} h_{\beta,c,1}^f(u) du,\label{landscape modification function}
\end{equation}
where $H^* := \min_x H(x)$ is the global minimum value of $H$, and $h_{\beta,c,1}^f:\mathbb{R}\rightarrow \mathbb{R}^+$ is a positive function that takes on the form of
\begin{equation}
    h_{\beta,c,1}^f(u):=\frac{1}{\beta f(u-c)+1}\label{h(x)}.
\end{equation}
Note that this transformation $H_{\beta,c,1}^f$ depends on three parameters, namely:
\begin{itemize}
    \item $\beta$ is the inverse temperature parameter as in the dynamics \eqref{langevin monte carlo}.

    \item $f:\mathbb{R}\rightarrow \mathbb{R}^+$ is a non-negative and non-decreasing function that satisfies $f\in C^1(\mathbb{R})$ and $f(0) = f^{\prime}(0) = 0$. Note that these conditions imply that $f(x) = 0$ for $x \leq 0$. In this paper, we focus ourselves on the choice of $f$ as specified by \eqref{eq:our f} below. 

    \item $c$ is known as the threshold parameter and is assumed to be chosen such that $c>H^*$. In the sublevel set $\{H(x)\leq c\}$, we note from \eqref{landscape modification function} that $H^f_{\beta,c,1} = H$, and hence the transformed and original landscape coincides. On the other hand, in the superlevel set $\{H(x) > c\}$, we observe that the modified landscape $H^f_{\beta,c,1}$ is compressed when $\beta$ is large enough, with its value is specified by $f$. As such, the parameters $f$ and $c$ jointly determine and control the transformation above the threshold $c$.
    
    \end{itemize}
    
    When $H$ is differentiable, owing to $h > 0$, it can readily be seen that the set of critical points of $H_{\beta,c,1}^f$ coincides with that of $H$. In the sequel, we shall be considering the modified Gibbs distribution $\pi_{\beta,c,1}^f\propto \exp\left(-\beta H_{\beta,c,1}^f\right)$. In the special case when $f = 0$, we see that $H_{\beta,c,1}^0 = H$, and as such the original Gibbs distribution $\pi^0_{\beta} = \pi_{\beta,c,1}^0$. This explains the superscript of $0$ in the original Gibbs distribution.
    
    As an illustration of the landscape modification, we plot a benchmark function $H(x)=\sin 2x+\frac{5}{2}\cos x+x^2-1.38$ and its associated $H_{\beta,c,1}^f(x)$ in Figure \ref{fig:landscape}, where $\delta$ is a hyperparameter in the choice of $f$, which will be explained in Section \ref{main results}.

\begin{figure}
    \centering
    \begin{minipage}[t]{0.7\textwidth}
    \includegraphics[width=10cm]{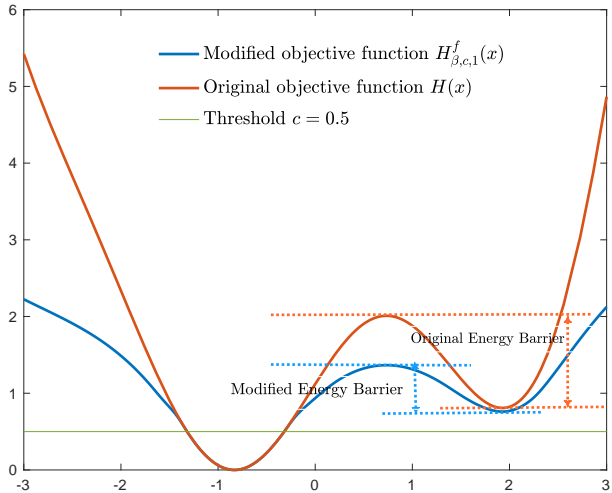}
    \caption{Landscape of original and modified objective function. Here $H(x)=\sin 2x+\frac{5}{2}\cos x+x^2-1.38$,
        $c=0.5, \beta=1, \delta=1$, and $f$ is specified in \eqref{eq:our f} below.}
    \label{fig:landscape}
    \end{minipage}
\end{figure}

With the above landscape modification idea in mind, we shall consider the Langevin Monte Carlo algorithm running instead on the modified landscape $H_{\beta,c,1}^f$, that is, we consider 
\begin{equation*}
    Y_{k+1}=Y_k-\eta \nabla H_{\beta,c,1}^f(Y_k)+\sqrt{2\eta/\beta} \cdot Z_k,
\end{equation*}
where $Y = \{Y_k\}_{k\geq 0}$ is the iteration of the modified dynamics, and the gradient of the modified objective function can be computed as
\begin{equation*}
    \nabla H_{\beta,c,1}^f=\frac{\nabla H}{\beta f(H-c)+1}.
\end{equation*}
Note that we need the information of both the gradient $\nabla H$ and the objective $H$ at each step, while the original Langevin Monte Carlo algorithm only requires the gradient $\nabla H$ in each iteration. From this perspective additional computational cost is incurred per iteration in $Y$. We are now in a position to state the key contributions of this paper:
\begin{itemize}
    \item We propose and analyze new Langevin Monte Carlo algorithm on a modified landscape suitable for stochastic optimization or sampling in the low temperature with minimal additional computational cost.

    \item The proof utilizes results from the metastability literature that effectively bounds various functional constants such as spectral gap and the Log-Sobolev constant by means of the associated energy barrier, thus bypassing the need of dissipativeness assumption as is typical in the literature \parencite{raginsky2017non}. For example, we shall prove that the modified objective function $H_{\beta,c,1}^f$ enjoys a reduced energy barrier $M^f$ than the original energy barrier $M$, which consequently leads to the property that the modified Gibbs distribution $\pi_{\beta,c,1}^f\propto \exp\left(-\beta H_{\beta,c,1}^f\right)$ has a Log-Sobolev constant that depends at most polynomially on $\beta$ and $M$ with $C_{LSI}^f=\mathcal{O}\left(\beta M+\frac{\beta M^2}{H(m_2)-c}\right)$, which is in sharp contrast with the Log-Sobolev constant of the original Gibbs distribution $\pi_{\beta}^0\propto \exp\left(-\beta H\right)$ that  exponentially depends on $\beta$ and $M$ as in \parencite{menz2014poincare}.
\end{itemize}

\section{Preliminaries}\label{preliminaries}
Before we present our main results, in this section we shall recall a few classical results and fix some notations that will be used throughout the paper. First, we formally define the stochastic dynamics of Langevin Monte Carlo.
\begin{definition}\label{LMC}
    (Langevin Monte Carlo (LMC), $X = \{X_k\}$) Suppose $\eta>0$ is the step size, then LMC can be written as 
\begin{equation}
    X_{k+1}=X_k-\eta \nabla H(X_k)+\sqrt{2\eta/\beta}\cdot Z_k,\label{standard LMC}
\end{equation}
where $Z_k\sim N(0,I)$ is an independent standard Gaussian distribution, and $X_0\sim\rho_0$ where $\rho_0$ can be arbitrary initial distribution. Let $\rho_k$ denote the probability distribution of $X_k$ that evolves following LMC.
\end{definition}

Under certain regularity conditions on $H$, it can be shown that the iteration \eqref{standard LMC} will converge to $\pi_\eta$, the stationary distribution of \eqref{standard LMC}. As $\eta\rightarrow{0}$, the LMC recovers the overdamped Langevin diffusion in continuous time. Moreover, one should note that $\pi_{\eta}\neq \pi_{\beta}^0$, and there exists a small stepsize-dependent bias between $\pi_{\eta}$ and $\pi_{\beta}^0$ which is in proportion to $\eta$. The explicit bias between $\pi_{\eta}$ and $\pi_{\beta}^0$ can be found in Section 7 in \parencite{vempala2019rapid}. 

Next, we recall the definitions of Poincar\'e, Log-Sobolev and Talagrand Inequalities. 
\begin{definition}
    (Poincar\'e and Log-Sobolev Inequalities, PI and LSI) A Borel probability measure $\nu$ on $\mathbb{R}^d$ satisfies the Poincar\'e inequality with constant $C_{PI}>0$, if for all test functions $\phi \in H^1(\nu)$,
$$
 \int\left(\phi-\mathbb{E}_\nu[\phi] \right)^2 d \nu \leq C_{PI} \int|\nabla \phi|^2 d\nu,
$$
where $H^1(\nu)=W^{2,1}(\nu)$ is the Sobolev space with respect to measure $\nu$. In a similar vein, the probability measure $\nu$ satisfies the Log-Sobolev inequality with constant $C_{LSI}>0$, if for all test function $\phi: \mathbb{R}^d \rightarrow \mathbb{R}^{+}$ with $I(\phi \nu \mid \nu)<\infty$ holds
$$
 \int \phi \log \frac{\phi}{\mathbb{E}_\nu[\phi]} d\nu \leq \frac{C_{LSI}}{2} \int \frac{|\nabla \phi|^2}{2 \phi} d \nu \left(=:I(\phi\nu|\nu)\right)
$$
\end{definition}

\begin{assumption}\label{assumption 6}
    (Talagrand Inequality)
 A Borel probability measure $\nu$ on $\mathbb{R}^d$ satisfies Talagrand Inequality with a constant $T_a > 0$, $i.e.$ for arbitrary probability measure $\rho$, we have
$$
\frac{T_a}{2} W_2(\rho, \nu)^2 \leq D_{KL}(\rho\|\nu),
$$
where $D_{KL}(\rho\|\nu)$ is the Kullback-Leiberg (KL) divergence of $\rho$ with respect to $\nu$, that is,
$$
D_{KL}(\rho\|\nu)= \mathbb{E}_{\rho}\left[\ln \frac{\rho}{\nu}\right]= \int_{\mathbb{R}^d} \rho(x) \ln \frac{\rho(x)}{\nu(x)} d x,
$$
where (with a slight abuse of notations) $\rho(\cdot)$ and $\nu(\cdot)$ are the respective density of $\rho$ and $\nu$.
\end{assumption}

Thirdly, we introduce the class of objective functions $H$ that we will be working with in this paper. The following definitions and assumptions are drawn from \parencite{menz2014poincare}, which give growth conditions on $H$ that ensure the finiteness of the set of stationary points of $H$.
\begin{definition}
    (Morse function) A smooth function $H: \mathbb{R}^d \rightarrow \mathbb{R}$ is a Morse function, if the Hessian $\nabla^2 H$ is nondegenerated on the set of critical points, $i.e.$ for some $1 \leq C<\infty$ holds
 $$
 \quad \forall x \in \mathcal{S}:=\left\{x \in \mathbb{R}^d: \nabla H=0\right\}: \frac{|\xi|^2}{C} \leq\left\langle \xi, \nabla^2 H(x) \xi\right\rangle \leq C|\xi|^2 .
 $$
\end{definition}

\begin{assumption}\label{assumption 1}
    (Growth condition in PI) $H \in C^3\left(\mathbb{R}^d, \mathbb{R}\right)$ is a Morse function, such that for some constants $C_H>0$ and $D_H \geq 0$ holds
$$
\begin{array}{rr}
\left(\mathrm{A} 1_{\mathrm{PI}}\right) & \liminf _{|x| \rightarrow \infty}|\nabla H| \geq C_H, \\
\left(\mathrm{A} 2_{\mathrm{PI}}\right) & \liminf _{|x| \rightarrow \infty}\left(|\nabla H|^2-\Delta H\right) \geq-D_H.
\end{array}
$$
\end{assumption}

\begin{assumption}
(Growth condition in LSI)\label{assumption 2}
$H \in C^3\left(\mathbb{R}^d, \mathbb{R}\right)$ is a Morse function, such that for some constants $C>0$ and $D \geq 0$ holds
$$
\begin{array}{ll}
\left(\mathrm{A} 1_{\mathrm{LSI}}\right) & \liminf _{|x| \rightarrow \infty} \frac{|\nabla H(x)|^2-\Delta H(x)}{|x|^2} \geq C, \\
\left(\mathrm{A} 2_{\mathrm{LSI}}\right) & \inf _x \nabla^2 H(x) \geq-D .
\end{array}
$$
\end{assumption}
The assumptions above ensure that the set of stationary points is discrete and finite. In particular, we define $\mathcal{S} :=\{m_1,m_2,...,m_S\}$ as the set of local minimum, where $S<\infty$.

Fourth, we recall the notion of saddle height and energy barrier, which measures the difficulty of the landscape of $H$ in a broad sense.

\begin{definition}
    (Saddle height)  The saddle height $\widehat{H}\left(m_i, m_j\right)$ between two local minimum $m_i, m_j$ is defined by
\begin{equation*}
\widehat{H}\left(m_i, m_j\right):=\inf \left\{\max _{s \in[0,1]} H(\gamma(s)): \gamma \in C\left([0,1], \mathbb{R}^n\right), \gamma(0)=m_i, \gamma(1)=m_j\right\}.
\end{equation*}
\end{definition}

\begin{assumption}\label{assumption 3}
    (Non-degeneracy, unique global minimum and energy barrier) There exists $\delta>0$ such that
    \begin{enumerate}
        \item The saddle height between two local minimum $m_i, m_j$ is attained at a unique critical point $s_{i, j} \in \mathcal{S}$ of index one, $i.e.$ $H\left(s_{i, j}\right)=$ $\widehat{H}\left(m_i, m_j\right)$, and if $\left\{\lambda_1, \ldots, \lambda_n\right\}$ denote the eigenvalues of $\nabla^2 H\left(s_{i, j}\right)$, then it holds $\lambda_1<0$ and $\lambda_i>0$ for $i=2, \ldots, n$. 

        \item The set of local minimum $\mathcal{S}=\left\{m_1, \ldots, m_S\right\}$ is ordered such that $m_1$ is a global minimum and for all $i \in\{3, \ldots, S\}$ yields
$$
H\left(s_{1,2}\right)-H\left(m_2\right) \geq H\left(s_{1, i}\right)-H\left(m_i\right)+\delta .
$$
        \item $m_1$ is the unique global minimum of $H$.
        
        \item  The energy barrier $M$ of $H$ is defined by
    \begin{equation*}
        M=H(s_{1,2})-H(m_2).
    \end{equation*}
    \end{enumerate}
\end{assumption}

While the assumption of unique global minimum of $H$ seems to be restrictive, this setting does indeed arise in practice for instance in the investigation of the Gaussian mixture model with an unique global mode, see \parencite{chen2020likelihood}. We shall also impose the following $L$-smooth and Lipschitz Hessian assumption on $H$.

\begin{assumption}\label{assumption 4}
    ($L$-smooth)
H is twice differentiable and $\|\nabla H(x)-\nabla H(y)\|\leq L\|x-y\|$.
\end{assumption}
\begin{assumption}\label{assumption 5}
    (Lipschitz Hessian)
$\nabla^2 H$ is $L_1$-Lipschitz, $i.e.$ $\|\nabla^2 H(x)-\nabla^2 H(y)\|\leq L_1\|x-y\|$. 
\end{assumption}

We now recall a few classical results arising from the metastability literature that will be frequently used throughout the paper. The following proposition comes from Corollary 2.18 in \parencite{menz2014poincare}, which bounds various functional constants in terms of the energy barrier.

\begin{proposition}\label{PI and LSI}
    Under Assumption \eqref{assumption 1} to \eqref{assumption 3}, let 
     $\left\{\kappa_i^2\right\}_{i=1}^S$ be given by 
$$
\kappa_i^2:=\operatorname{det} \nabla^2 H\left(m_i\right) .
$$
On one hand, if one has one unique global minimum, namely $H\left(m_1\right)<$ $H\left(m_i\right)$ for $i \in\{2, \ldots, S\}$, it holds
$$
C_{PI} \lesssim \frac{1}{\kappa_2} \frac{2 \pi  \sqrt{\left|\operatorname{det} \nabla^2\left(H\left(s_{1,2}\right)\right)\right|}}{\beta\left|\lambda^{-}\left(s_{1,2}\right)\right|} \exp \left(\beta \left(H(s_{1,2})-H(m_2)\right)\right),
 $$
 $$
C_{LSI} \lesssim\left(\beta\left(H(m_2)-H(m_1)\right)+\ln \left(\frac{\kappa_1}{\kappa_2}\right)\right) C_{PI} .
 $$
On the other hand, if $H\left(m_1\right)=H\left(m_2\right)<H\left(m_i\right)$ for $i \in\{3, \ldots, M\}$, it holds
$$
C_{PI} \lesssim \frac{1}{\kappa_1+\kappa_2} \frac{2 \pi  \sqrt{\left|\operatorname{det} \nabla^2\left(H\left(s_{1,2}\right)\right)\right|}}{\beta\left|\lambda^{-}\left(s_{1,2}\right)\right|} \exp \left(\beta \left(H(s_{1,2})-H(m_2)\right)\right),
$$
$$
\quad C_{LSI} \lesssim \frac{1}{\Lambda\left(\kappa_1, \kappa_2\right)} \frac{2 \pi  \sqrt{\left|\operatorname{det} \nabla^2\left(H\left(s_{1,2}\right)\right)\right|}}{\beta\left|\lambda^{-}\left(s_{1,2}\right)\right|} \exp \left(\beta \left(H(s_{1,2})-H(m_2)\right)\right),
$$
where $\Lambda(p,q):=\frac{p-q}{\ln p-\ln q}$, and $\lambda^-(s_{1,2})$ is the negative eigenvalue of $\nabla^2 H(s_{1,2})$. The symbol $\lesssim$ means $\leq$ up to a multiplicative error $1+\mathcal{O}\left(|\ln \beta|^{3/2}\beta^{-1/2}\right)$ as explained in Theorem 2.12 in \parencite{menz2014poincare}.
\end{proposition}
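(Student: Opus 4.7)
The plan is to follow the two-scale decomposition of the energy landscape, in the spirit of Menz--Schlichting. The idea is to split $\pi_\beta^0 \propto e^{-\beta H}$ into a mixture over the basins of the finite set of local minima $\mathcal{S} = \{m_1, \ldots, m_S\}$, establish near-Gaussian local functional inequalities on each basin, and then patch them together using a macroscopic mean-difference estimate whose cost is dictated by the saddle heights. First I would fix a smooth partition of unity, or equivalently a Voronoi-type partition $\{A_i\}_{i=1}^S$ with $m_i \in \mathrm{int}(A_i)$ and boundaries passing through the relevant saddles $s_{i,j}$. Let $\pi_i$ denote $\pi_\beta^0$ conditioned on $A_i$ with weights $Z_i = \pi_\beta^0(A_i)$. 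The Laplace method, applied to the Morse function $H$ under Assumptions~\ref{assumption 1}--\ref{assumption 3}, yields $Z_i \asymp (2\pi/\beta)^{d/2} \kappa_i^{-1} e^{-\beta H(m_i)}$ up to the multiplicative error $1+\mathcal{O}(|\ln\beta|^{3/2}\beta^{-1/2})$, which is where the symbol $\lesssim$ in the statement originates.

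Next, on each basin $A_i$ I would prove that $\pi_i$ satisfies Poincar\'e and Log-Sobolev inequalities with constants of order $\mathcal{O}(1)$ uniformly in $\beta$. Near $m_i$, the Morse condition gives $H(x) \approx H(m_i) + \tfrac{1}{2}\langle x-m_i, \nabla^2 H(m_i)(x-m_i)\rangle$, so that $\pi_i$ is a perturbation of a Gaussian and inherits dimension-free functional constants via Bakry--\'Emery. In the transition region of $A_i$ one uses the growth assumptions in $(\mathrm{A1_{PI}})$--$(\mathrm{A2_{LSI}})$ to extend the bounds by a Holley--Stroock-type perturbation, yielding the local constants $C_{PI}^{loc}, C_{LSI}^{loc} = \mathcal{O}(1)$.

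The macroscopic piece is the delicate one. A two-scale decomposition of variance and entropy reduces the problem to bounding mean-difference terms of the form $\sum_{i,j}(\bar f_i - \bar f_j)^2 Z_i Z_j$ (resp.\ the entropic analogue) by the full Dirichlet form $\int |\nabla f|^2 \, d\pi_\beta^0$. The strategy is an Eyring--Kramers transport estimate: construct a channel transporting $\pi_i$ to $\pi_j$ along the minimum-energy path through $s_{i,j}$, and use a co-area / tubular Laplace expansion at the saddle to see that the cost is governed by $(2\pi/\beta)^{(d-1)/2}|\det\nabla^2 H(s_{1,2})|^{1/2}|\lambda^-(s_{1,2})|^{-1} e^{-\beta H(s_{1,2})}$. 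Combining this with $Z_2 \asymp \kappa_2^{-1}(2\pi/\beta)^{d/2} e^{-\beta H(m_2)}$ produces the advertised $\exp(\beta(H(s_{1,2})-H(m_2)))$ factor. The two cases of the proposition then come from whether $H(m_1)<H(m_2)$ or $H(m_1)=H(m_2)$: in the first case the macroscopic measure on basin labels is extremely unbalanced, so Rothaus' lemma inflates the LSI bound by $\beta(H(m_2)-H(m_1))+\ln(\kappa_1/\kappa_2)$ over PI; in the second case the weights are comparable and the entropy/variance comparison at the macroscopic level gives the logarithmic mean $\Lambda(\kappa_1,\kappa_2)$ in place of $\kappa_1+\kappa_2$.

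The hard part will be the saddle-crossing mean-difference estimate and the sharp Laplace asymptotics that extract the exact prefactor $2\pi\sqrt{|\det\nabla^2 H(s_{1,2})|}/|\lambda^-(s_{1,2})|$ with the advertised $(1+\mathcal{O}(|\ln\beta|^{3/2}\beta^{-1/2}))$ error. Controlling this requires restricting to $\beta^{-1/2}\ln\beta$-neighbourhoods of the critical points, handling the negative eigendirection of $\nabla^2 H(s_{1,2})$ separately in the co-area slicing, and verifying that contributions from outside these neighbourhoods are subdominant using the growth conditions. Once these asymptotics are in hand, the rest of the argument is a relatively mechanical assembly of the local and macroscopic inequalities via the two-scale inequality.
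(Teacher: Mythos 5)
The paper does not actually prove this proposition: it is quoted verbatim from Corollary 2.18 of \parencite{menz2014poincare}, and the only ``proof'' the paper offers is that citation. Your outline is therefore being compared not with an argument in the paper but with the argument in the cited reference, and at the level of architecture it is faithful to it: the partition into basins of the local minima, the local near-Gaussian Poincar\'e and Log-Sobolev inequalities obtained from the Morse condition together with a perturbation/growth argument, the two-scale splitting of variance and entropy into local and macroscopic parts, the mean-difference (transport) estimate through the saddle carrying the Eyring--Kramers prefactor, and the two macroscopic cases producing respectively the Rothaus-type factor $\beta(H(m_2)-H(m_1))+\ln(\kappa_1/\kappa_2)$ and the logarithmic mean $\Lambda(\kappa_1,\kappa_2)$ are precisely the ingredients of Theorems 2.9 and 2.12 and Corollary 2.18 of that reference. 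Two caveats. First, what you have written is a plan rather than a proof: the two steps you yourself flag as hard --- the weighted transport estimate through $s_{1,2}$ yielding the sharp prefactor $2\pi\sqrt{|\det\nabla^2 H(s_{1,2})|}/(\beta|\lambda^-(s_{1,2})|)$ with multiplicative error $1+\mathcal{O}(|\ln\beta|^{3/2}\beta^{-1/2})$, and the discrete two-point entropy-versus-variance comparison that converts unbalanced basin weights into the $\ln(\kappa_1/\kappa_2)$ and $\Lambda(\kappa_1,\kappa_2)$ terms --- constitute essentially the entire analytic content of the result, and in your write-up they are named rather than carried out. Second, a minor imprecision: with the normalization of the Poincar\'e inequality used in this paper, the local constants on each basin scale like $\mathcal{O}(\beta^{-1})$ (Gaussian scaling), not $\mathcal{O}(1)$; this is exactly where the $1/\beta$ in the stated prefactor originates, so the bookkeeping of $\beta$-factors in your local step needs to be tightened before the assembly gives the advertised bounds.
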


It can readily be seen that the upper bounds of $C_{PI}$ and $C_{LSI}$ in both cases are composed of the term $e^{\beta M}$ multiplied by a prefactor which is a rational polynomial comprising of the parameters related to $H$. We shall focus on the dominant term $e^{\beta M}$ where $M$ is the energy barrier, that is, $M=H(s_{1,2})-H(m_2)$. Our proposed landscape modification thus seeks to mitigate the energy barrier by transformation to hopefully yield smaller upper bounds on PI and LSI constants, while at the same time we seek to ensure the prefactor in front of the exponential term does not depend exponentially on both $\beta$ and $M$.

The following proposition is taken from Theorem 1 in \parencite{vempala2019rapid}, which gives an upper bound on the KL divergence between the $k^{th}$ iterate of LMC to the Gibbs distribution $\pi_{\beta}^0$.

\begin{proposition} \label{mixing KL}
Under Assumption \ref{assumption 4}, for the Langevin Monte Carlo defined in Definition \ref{LMC}, suppose $\nu=\pi_{\beta}^0$ is the $L$-smooth Gibbs distribution satisfying LSI with constant $C_{LSI}>0$. For any $X_0 \sim \rho_0$ with $D_{KL}(\rho_0\|\nu)<\infty$, the iterates $X_k \sim \rho_k$ of LMC with step size $0<\eta \leq \frac{\beta C_{LSI}}{4 L^2}$ satisfy
$$
D_{KL}(\rho_k\|\nu)\leq e^{-\frac{\eta k}{\beta C_{LSI}}} D_{KL}(\rho_0\|\nu)+8 \eta d L^2\cdot C_{LSI}/\beta,
$$
where we recall that $D_{KL}(\rho\|\nu)$ is the KL-divergence of $\rho$ with respect to $\nu$. 
\end{proposition}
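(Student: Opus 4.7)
The plan is to adapt the continuous-time interpolation argument of \parencite{vempala2019rapid} to the $\beta$-temperature setting of Definition \ref{LMC}. For each iteration step $k$ and $t\in[0,\eta]$, I would embed the transition from $X_k$ to $X_{k+1}$ in continuous time by defining
\begin{equation*}
\widetilde{X}_t := X_k - t\,\nabla H(X_k) + \sqrt{2t/\beta}\,Z_k,
\end{equation*}
so that $\widetilde{X}_0=X_k$, $\widetilde{X}_\eta$ has the same law as $X_{k+1}$, and the interpolation solves the SDE $d\widetilde{X}_t = -\nabla H(X_k)\,dt + \sqrt{2/\beta}\,dB_t$ in which the drift is frozen at the start of the step. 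Let $\rho_t$ denote the law of $\widetilde{X}_t$ and set $F(t):=D_{KL}(\rho_t\|\pi_\beta^0)$.

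The first step is to compute $F'(t)$ from the Fokker-Planck equation for $\rho_t$, an integration by parts, and the identity $\nabla\log\pi_\beta^0 = -\beta\nabla H$. This produces a dissipation identity of the form
\begin{equation*}
F'(t) = -\frac{1}{\beta}\,\mathbb{E}_{\rho_t}\!\left[\bigl|\nabla\log(\rho_t/\pi_\beta^0)\bigr|^2\right] + \mathbb{E}\!\left[\bigl\langle \nabla H(\widetilde{X}_t) - \nabla H(X_k),\ \nabla\log(\rho_t/\pi_\beta^0)(\widetilde{X}_t)\bigr\rangle\right],
\end{equation*}
consisting of the standard relative Fisher dissipation and a discretization error arising from the frozen drift. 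Applying Young's inequality with an appropriately chosen multiplier absorbs half of the Fisher dissipation and leaves a residual proportional to $\mathbb{E}|\nabla H(\widetilde{X}_t) - \nabla H(X_k)|^2$. The $L$-smoothness assumption together with the moment identity $\mathbb{E}|\widetilde{X}_t - X_k|^2 = t^2\,\mathbb{E}|\nabla H(X_k)|^2 + 2td/\beta$ then controls this residual in terms of $t$, $d$, $L$, $\beta$, and a uniform second-moment bound on $\nabla H(X_k)$.

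The second step is to invoke the LSI on the remaining half of the Fisher dissipation to convert it into a linear term in $F(t)$; combined with the error estimate this yields a Gronwall-type differential inequality of the form
\begin{equation*}
F'(t) + \frac{1}{\beta C_{LSI}}F(t) \leq E(t),
\end{equation*}
valid on $[0,\eta]$ under the stepsize condition $\eta \leq \beta C_{LSI}/(4L^2)$, which is precisely what is needed to guarantee that Young's absorption preserves a strictly contractive rate. Multiplying by the integrating factor $e^{t/(\beta C_{LSI})}$ and integrating gives the one-step bound $F(\eta) \leq e^{-\eta/(\beta C_{LSI})}F(0) + \int_0^\eta E(s)\,ds$; iterating over $k$ steps and summing the resulting geometric series in the bias term produces the stated KL bound, with the bias $8\eta dL^2 C_{LSI}/\beta$ obtained after collecting constants.

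The main obstacle will be the uniform-in-$k$ second-moment control of $\nabla H(X_k)$ needed to close the discretization-error estimate. One must show that the iterates do not drift away from the bulk of $\pi_\beta^0$, typically by combining $L$-smoothness with an a priori bound on $\mathbb{E}_{\pi_\beta^0}|\nabla H|^2$ (derived by integration by parts against $e^{-\beta H}$) together with a change-of-measure argument that uses the hypothesis $D_{KL}(\rho_0\|\pi_\beta^0)<\infty$ and the (already proved) partial contraction of $F$. A secondary technicality is the careful bookkeeping in the Young's inequality step so that the numerical constants $4$ in the stepsize restriction and $8$ in the bias term emerge exactly rather than larger values.
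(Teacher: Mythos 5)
The paper does not actually prove this proposition: it is quoted (with the temperature $\beta$ made explicit) from Theorem 1 of \parencite{vempala2019rapid}, and your interpolation argument --- freezing the drift over one step, differentiating $D_{KL}(\rho_t\|\pi_\beta^0)$ via the Fokker--Planck equation, splitting off the discretization error with Young's inequality, invoking the LSI on the remaining Fisher information, and closing with an integrating factor and a geometric sum --- is exactly the proof given there, so your route coincides with the paper's (cited) one. The only refinement worth noting is that the ``main obstacle'' you flag is resolved in that reference not by a uniform-in-$k$ second-moment bound on $\nabla H(X_k)$, but by the pointwise-in-$\rho$ estimate $\mathbb{E}_{\rho}\left[\|\nabla H\|^2\right]\lesssim \frac{L^2 C_{LSI}}{\beta}\,D_{KL}(\rho\|\pi_\beta^0)+\frac{dL}{\beta}$ (obtained from $L$-smoothness, integration by parts against $e^{-\beta H}$, and the LSI), which feeds the KL divergence itself back into the differential inequality so that the Gronwall step still closes under the stated stepsize restriction.
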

We can see that a smaller $C_{LSI}$ can indeed lead to a smaller upper bound in Proposition \ref{mixing KL}.

\section{Main results}\label{main results}

In this section, we state the main results of the paper. The core idea of landscape modification rests on executing the LMC algorithm on an alternative landscape $H_{\beta,c,1}^f$ with a reduced energy barrier. We shall compare $Y = \{Y_k\}$ that runs on the alternative landscape of $H_{\beta, c, 1}^f$ as an alternative to the original LMC $X = \{X_k\}$, that is,
\begin{align}
(\text{Original LMC dynamics})&\quad X_{k+1}=X_k-\eta \nabla H(X_k)+\sqrt{2\eta/\beta}\cdot Z_k,\\
(\text{Modified LMC dynamics})&\quad Y_{k+1}=Y_k-\eta \nabla H_{\beta,c,1}^f(Y_k)+\sqrt{2\eta/\beta}\cdot Z_k,\label{modified LMC}
\end{align}
where $Z_k\sim N(0,I)$, and $X_0\sim \rho_0, Y_0\sim \rho_0^f$ are the respective initial distributions for $X$ and $Y$.

Now, we turn our discussion to the choice of the parameter $f$ in landscape modification. First of all, we recall that we restrict ourselves to consider $f \in C^1(\mathbb{R})$, since this condition ensures that the Hessian of $H_{\beta,c,1}^f$ is continuous (see for instance the proof of Lemma \ref{LSI Constant} below), which is required and is essential to the proofs of our main results. As $f_1(x) := x\vee 0$ unfortunately is not differentiable at $x=0$, in order to guarantee that $f\in C^1(\mathbb{R})$, we shall add an oscillation to $f_1(x)=x\vee 0$ in a small neighborhood $(0,\delta)$ on the right side of $x=0$ to define the following $f$:
\begin{align}\label{eq:our f}
    f(x)=f(x,\delta) :=
\begin{cases}
    0, &\mathrm{ for }\quad x\leq 0,\\
    \frac{20}{3\delta} x^2, &\mathrm{ for }\quad 0<x\leq\frac{\delta}{4},\\
    -\frac{20}{3\delta} (x-\frac{\delta}{2})^2+\frac{5}{6}\delta, &\mathrm{ for }\quad \frac{\delta}{4}<x\leq \frac{\delta}{2},\\
    A\exp (-\frac{1}{B(x-\frac{\delta}{2})^2})+\frac{5}{6}\delta, &\mathrm{ for }\quad \frac{\delta}{2}<x\leq\delta,\\
    x, &\mathrm{ for }\quad x> \delta,
\end{cases}
\end{align}
where $\delta$ is a parameter to be chosen, and $A=\frac{\delta}{6} e^{3/2}$, $B=\frac{8}{3\delta^2}$, therefore $f(x)\in C^1(\mathbb{R})$. In the rest of this paper, this choice of $f$ is kept fixed. In the following sections, we will show that a small enough $\delta$ can lead to the same effect on reducing the energy barrier as that of $f_1$. This is as intuitively expected since $f_1$ and $f$ are asymptotically equivalent for large $x$. To illustrate both $f_1$ and $f$, these are plotted in Figure \ref{fig:f(x)} with the choice of $\delta = 1$. To demonstrate the effect of landscape modification with this choice of $f$, we recall that in Section \ref{subsec:our contributions} we compare both the modified objective function $H_{\beta,c,1}^f$ as well as the original objective function $H$ in Figure \ref{fig:landscape} on a benchmark objective function.

\begin{figure}
    \centering
    \begin{minipage}[t]{0.7\textwidth}
     \centering
     \includegraphics[width=9cm]{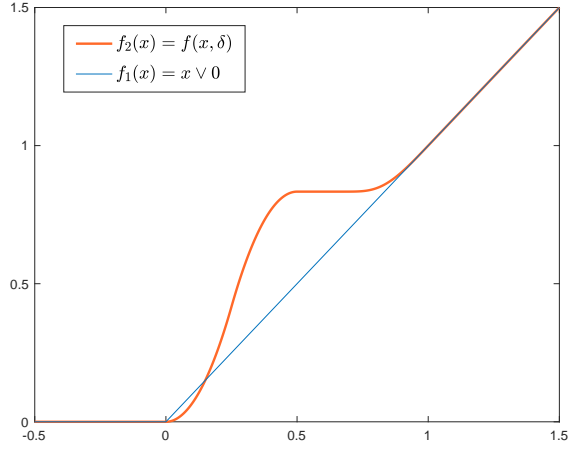}
     \caption{Oscillation to $f_1(x)=x\vee 0$ within $(0,\delta)$, where $f_2(x)=f(x,\delta)$ and $\delta=1$.}
     \label{fig:f(x)}
    \end{minipage}

\end{figure}

From the perspective of sampling, we are interested in the mixing performance of the modified LMC $Y$ towards the original Gibbs distribution $\pi_{\beta}^0\propto e^{-\beta H}$, while from the perspective of optimization, we shall investigate the stochastic optimization of $H$ via the modified LMC $Y$.

We shall impose an additional assumption which assures that the modified distribution $\pi_{\beta,c,1}^f$ exists, or equivalently, under which it is guaranteed that $\exp\left(-\beta H_{\beta,c,1}^f\right)$ is integrable in $\mathbb{R}^d$. This result is to be proved in Section \ref{existence of modified distribution}.

\begin{assumption}\label{assumption integrable}
    Suppose the temperature is low enough with $\beta>\frac{1}{\delta}$ and $c$ is chosen such that $H^*<c<H(m_2)-\delta$. In addition, there exists $I_1=I_1(c,\delta) < \infty$ such that \begin{gather}
        \int_{H>c+\delta}\frac{2\delta}{H-c}dx<I_1.\label{integrable}
    \end{gather}
\end{assumption}

This assumption can be readily verified for a broad class of Morse functions, for instance the class of Morse functions with quadratic growth, as is common in the simulated annealing literature \parencite{monmarche2018hypocoercivity}.

\subsection{Some commonly used notations}
We now formally introduce various notations that will be used in various parts of the paper. In general, the variables with a superscript of $f$ are associated with the modified LMC $Y$, while the objects without any superscript or with a superscript of $0$ are associated with the original LMC $X$. For reader's convenience, a table of commonly used notations is summarized in Table \ref{tab:notation} in Appendix \ref{appendix A}. In the list below, we highlight several important notations:

\begin{itemize}
    \item Denote $G(x)=H_{\beta,c,1}^f(x)$ as the modified objective function. Under Assumption \ref{assumption 2} and \ref{assumption 3}, we can find that $x^*=m_1$ is the global minimum, and $m_2$ is the second smallest local minimum. We also denote $H^*=H(x^*)$ to be the global minimum of $H$ at $x^*$.

    \item $\rho_k$ and $\rho_k^f$ are the distributions of the original iteration $X_k$ and modified iteration $Y_k$ respectively, that is, $X_k \sim \rho_k, Y_k\sim \rho_k^f$. 
We shall denote $\pi_1=\pi_{\beta}^0\propto e^{-\beta H}$ to be the original Gibbs distribution, while $\pi_2=\pi_{\beta,c,1}^f\propto \exp\left(-\beta H_{\beta,c,1}^f\right)$ as the modified Gibbs distribution, which $\pi_1$ and $\pi_2$ are two shorthand notations. We also write $Z_1=\int e^{-\beta H}dx$ and $Z_2=\int \exp\left(-\beta H_{\beta,c,1}^f\right)dx$ to be the respective normalization constants for $\pi_1$ and $\pi_2$. 

    \item $C_{LSI}^f$ is the Log-Sobolev constant of $\pi_{\beta,c,1}^f$, and $C_{PI}^f$ is the Poincar\'e constant of $\pi_{\beta,c,1}^f$. Also, $G=H_{\beta,c,1}^f$ is $L^f$-smooth (to be proved in Theorem \ref{theorem 1}). The constants of Talagrand Inequality for $\pi_1$ and $\pi_2$ are respectively $T_a$ and $T_a^f$, and $M$ and $M^f$ are the energy barriers of $H$ and $G$ respectively. The set of local minima of $G$ is defined to be the set $\mathcal{S}^f=\{m_1^f,m_2^f,\ldots,m_S^f\}$, and $s_{1,2}^f$ is the saddle point between $m_1^f$ and $m_2^f$.

    \item Denote $K=\nabla ^2 H(m_1)$, and $\lambda_{min}(K)=\gamma>0$ to be the smallest eigenvalue of $K$ since $K$ is a positive definite matrix by Assumption \ref{assumption 3}. For arbitrary positive definite matrix $A\in \mathbb{R}^{d\times d}$ and $x\in \mathbb{R}^d$, the matrix norm $\|x\|_A^2=\frac{1}{2}x^TAx$, and $\|B\|$ denotes the operator norm of matrix $B$. We also write $\omega_d$ to be the volume of the unit ball in $d$-dimensional Euclidean space, and $\mu$ to be the Lebesgue measure in $\mathbb{R}^d$. 

    \item For any two probability measures $\rho$ and $\nu$ on $\mathbb{R}^d$, we denote $D_{TV}(\rho,\nu)=\frac{1}{2}\int |\rho-\nu|dx$ as the total variation distance between $\rho$ and $\nu$. Denote $W_2(\rho,\nu)$ as the Wasserstein distance between $\rho$ and $\nu$.

\end{itemize}

\subsection{Improved bound on total variation mixing for sampling from $\pi_{\beta}^0$}

In the following three subsections, we present the main results of the paper. We shall see that the time complexity of the modified LMC depends crucially on the modified LSI $C^f_{LSI}$, which has a polynomial dependence on both $\beta$ and $M$ under our assumptions (i.e. with appropriate parameters tuning) while the original LMC exhibits an exponential dependence on both $\beta$ and $M$.

Our first main result gives non-asymptotic error bounds on utilizing the modified LMC $Y$ to approximately sample from the original Gibbs distribution $\pi_{\beta}^0$ at low-temperature:

\begin{theorem}\label{theorem 1}
    Under Assumption \ref{assumption 1} to \ref{assumption 5} and \ref{assumption integrable},
    then for any $Y_0\sim \rho_0^f$ and step size $0<\eta\leq \frac{\beta C_{LSI}^f}{16L^{f^2}}$, denote $\beta_0=\max\left\{\frac{1}{\delta^2}, \frac{3}{20\delta},\frac{1}{\delta}\right\}$, and when $\beta>\beta_0$, 
    we have 
\begin{align*}
 D_{TV}(\rho_k^f,\pi_{\beta}^0) &\leq e^{-\frac{\eta k}{2\beta C_{LSI}^f}}\sqrt{2D_{KL}(\rho_0^f\|\pi_{\beta,c,1}^f)}+4\sqrt{\eta d L^{f^2}\cdot C_{LSI}^f/\beta}\\
 &\quad + \left(\frac{\beta L}{2\pi}\right)^{d/2}\left(\mu(c\leq H \leq c+\delta)e^{-\beta (c - H^*)}+e^{-\beta (c - H^*)}I_1 \right),
\end{align*}
where $C_{LSI}^f=\mathcal{O}\left(\beta M+\frac{\beta M^2}{H(m_2)-c} \right)$, and $G$ is $L^f$-smooth with 
$$L^f\leq \max\left\{(2(c-H^*)+3\delta)L, \left(\frac{80}{3}(c-H^*)+\frac{92}{3}\delta\right) L, \frac{c-H^*}{2}+\frac{\delta}{8}+\frac{\delta}{2}L\right\}.$$
\end{theorem}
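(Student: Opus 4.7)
The plan is to split the target error by the triangle inequality
\begin{equation*}
D_{TV}(\rho_k^f,\pi_\beta^0) \;\leq\; D_{TV}(\rho_k^f,\pi_{\beta,c,1}^f) + D_{TV}(\pi_{\beta,c,1}^f,\pi_\beta^0),
\end{equation*}
and to control the two pieces separately. The first piece is the mixing error of the modified LMC $Y$ to its own stationary distribution $\pi_{\beta,c,1}^f$, and will be handled by Proposition \ref{mixing KL} combined with Pinsker's inequality; the second is the sampling bias introduced by the landscape modification, and will be handled by a direct Laplace-type comparison. Two auxiliary estimates feed both arguments: the $L^f$-smoothness of $G := H^f_{\beta,c,1}$, and the Log-Sobolev constant $C_{LSI}^f$ of $\pi_{\beta,c,1}^f$.

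For the auxiliary estimates, I would first compute
\begin{equation*}
\nabla^2 G \;=\; \frac{\nabla^2 H}{\beta f(H-c)+1} \;-\; \frac{\beta f'(H-c)}{(\beta f(H-c)+1)^2}\,\nabla H\,(\nabla H)^T
\end{equation*}
and bound its operator norm piecewise on the regions defined by \eqref{eq:our f}, using the elementary inequality $|\nabla H|^2 \leq 2L(H-H^*)$ together with the explicit formulas for $f$ and $f'$; collecting these bounds yields the three-way maximum asserted for $L^f$. For the LSI constant, positivity of $h^f_{\beta,c,1}$ forces the critical set of $G$ to coincide with that of $H$, and since $\nabla H = 0$ at any critical point the Hessian simplifies to $\nabla^2 G = \nabla^2 H/(\beta f(H-c)+1)$ there. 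The central observation is then
\begin{equation*}
M^f \;=\; G(s_{1,2}) - G(m_2) \;=\; \int_{H(m_2)}^{H(s_{1,2})}\frac{du}{\beta(u-c)+1} \;=\; \frac{1}{\beta}\log\frac{\beta(H(s_{1,2})-c)+1}{\beta(H(m_2)-c)+1},
\end{equation*}
so that $e^{\beta M^f} = 1 + \beta M/(\beta(H(m_2)-c)+1)$ is polynomial rather than exponential in $M$. Substituting this, the rescaled Hessian prefactors at $m_2$ and $s_{1,2}$, and $\beta(G(m_2)-G(m_1)) = \beta(c-H^*) + O(\beta\delta) + O(\log(\beta M))$ into Proposition \ref{PI and LSI} delivers the stated order $C_{LSI}^f = O(\beta M + \beta M^2/(H(m_2)-c))$.

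With $L^f$ and $C_{LSI}^f$ in hand, Proposition \ref{mixing KL} applied to $Y$ with target $\pi_{\beta,c,1}^f$ yields
\begin{equation*}
D_{KL}(\rho_k^f \| \pi_{\beta,c,1}^f) \;\leq\; e^{-\eta k/(\beta C_{LSI}^f)}\, D_{KL}(\rho_0^f \| \pi_{\beta,c,1}^f) + 8\eta d L^{f^2} C_{LSI}^f/\beta,
\end{equation*}
and Pinsker's inequality followed by $\sqrt{a+b}\leq\sqrt{a}+\sqrt{b}$ produces exactly the first two summands of the claim. For the Gibbs comparison piece, Laplace's method around the global minimum $m_1$ combined with $\nabla^2 H(m_1)\preceq LI$ (from $L$-smoothness) yields the lower bounds $Z_1, Z_2 \geq (2\pi/(\beta L))^{d/2}e^{-\beta H^*}$ at low enough temperature, so that $1/Z_i \leq (\beta L/2\pi)^{d/2} e^{\beta H^*}$. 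Since $G=H$ on $\{H\leq c\}$, the TV integral reduces, up to a normalizer mismatch which is itself bounded by the tails, to $Z_2^{-1}\bigl(\int_{c\leq H\leq c+\delta}e^{-\beta G}\,dx + \int_{H>c+\delta}e^{-\beta G}\,dx\bigr)$. On the first set, $G\geq c$ gives the bound $e^{-\beta c}\mu(c\leq H\leq c+\delta)$; on the second, a pointwise estimate $e^{-\beta(G(x)-c)}\leq 2\delta/(H(x)-c)$ valid for $H(x)>c+\delta$ and $\beta>1/\delta$, combined with Assumption \ref{assumption integrable}, gives $e^{-\beta c} I_1$. Multiplying by the Laplace lower bound on $Z_2$ produces the advertised $(\beta L/2\pi)^{d/2}e^{-\beta(c-H^*)}$ prefactor.

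I expect the principal obstacle to be this final pointwise tail estimate: the oscillation factor $\exp(-\beta\int_c^{c+\delta}h^f_{\beta,c,1}\,du)$ arising from the $C^1$-smoothing in \eqref{eq:our f} must be controlled by a $\beta$-independent constant so that it does not spoil the polynomial tail $(\beta\delta+1)/(\beta(u-c)+1)$, and this is precisely where the low-temperature thresholds assembled in $\beta_0 = \max\{1/\delta^2,\,3/(20\delta),\,1/\delta\}$ are calibrated. A secondary subtlety is verifying that the Morse and growth hypotheses of Assumptions \ref{assumption 1}--\ref{assumption 3} transfer from $H$ to $G$ so that Proposition \ref{PI and LSI} is actually applicable to the modified landscape.
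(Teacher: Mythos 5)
Your proposal follows essentially the same route as the paper's proof: the same triangle-inequality decomposition, Proposition \ref{mixing KL} plus Pinsker for the mixing term, the same piecewise Hessian bound for $L^f$, the exact computation of $M^f$ showing $e^{\beta M^f}$ is polynomial in $M$ fed into Proposition \ref{PI and LSI}, and the same Laplace lower bound on the normalizer with the pointwise tail estimate $e^{-\beta(G-c)}\leq 2\delta/(H-c)$ from Assumption \ref{assumption integrable}. The one "obstacle" you flag is not actually one — the oscillation term $\int_c^{c+\delta}h^f_{\beta,c,1}\,du$ is nonnegative and can simply be dropped when lower-bounding $G$, with $\beta>1/\delta$ only needed to absorb the $+1/\beta$ into $2\delta$ — though your secondary point, that the applicability of Proposition \ref{PI and LSI} to the modified landscape $G$ deserves explicit verification, is a fair one that the paper also leaves implicit.
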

The proof of Theorem \ref{theorem 1} is deferred to Section \ref{sampling performance}. We observe that, for the first term to be smaller than or equal to a given error tolerance $\epsilon$, the number of iterations $k$ required is at most 
\begin{equation*}
k = \mathcal{O}\left(\dfrac{\beta C^f_{LSI}}{\eta} \log\left(\dfrac{1}{\epsilon}\right)\right),
\end{equation*}
which depends on the inverse temperature $\beta$ and energy barrier $M$ at most polynomially. Moreover, in order for the second term to be smaller than $\epsilon$, the step size $\eta$ can be picked with $\eta=\Theta\left(\epsilon^2\right)$, which can be independent of $\beta$. 

\subsection{Improved convergence towards the global minimum}\label{main: convergence rate to global minimum}

In our second main result, we present an improved convergence rate of iterate $Y_k$ towards the global minimum $x^*$ by giving an upper bound on the probability of reaching the superlevel set of the form 
$$\mathbb{P}\left(H(Y_k)>H^*+\epsilon\right),$$
where $\epsilon$ is a given precision or error tolerance.

\begin{theorem}\label{comparison of probability}
    Under Assumption \ref{assumption 1} to \ref{assumption 5} and \ref{assumption integrable}, for any given precision with $0<\epsilon< c - H^*$, step size $0<\eta\leq \min\left\{\frac{\beta C_{LSI}^f}{16L^{f^2}},\frac{\beta C_{LSI}}{16L^2}\right\}$ and low enough temperature with $\beta>\beta_0=\max\left\{\frac{1}{\delta^2}, \frac{3}{20\delta},\frac{1}{\delta}\right\}$, if there exists $I_2 < \infty$ such that
    \begin{equation*}
        \int_{\mathbb{R}^d}e^{-(H-H^*)}dx<I_2,
    \end{equation*}
    then we have 
    \begin{align*}
        \mathbb{P}(H(X_n)>H^*+\epsilon) &\leq 2e^{-\frac{\eta n}{2\beta C_{LSI}}}\sqrt{2D_{KL}(\rho_0\|\pi_{\beta}^0)}+8\sqrt{\eta d L^2\cdot C_{LSI}/\beta}\\
        &\quad + \left(\frac{\beta L}{2\pi}\right)^{d/2}e^{-(\beta-1)\epsilon}I_2,
    \end{align*}
    while
    \begin{align*}
        \mathbb{P}(H(Y_k)>H^*+\epsilon)
        &\leq 2e^{-\frac{\eta k}{2\beta C_{LSI}^f}}\sqrt{2D_{KL}(\rho_0^f\|\pi_{\beta,c,1}^f)}+8\sqrt{\eta d L^{f^2}\cdot C_{LSI}^f/\beta}\\
        &\quad + \left(\frac{\beta L}{2\pi}\right)^{d/2}\left(2\mu(c\leq H\leq c+\delta)e^{-\beta (c-H^*)}+2e^{-\beta (c-H^*)}I_1+e^{-(\beta-1)\epsilon}I_2\right),
    \end{align*}
where we recall that $C_{LSI}^f$ and $L^f$ are as stated in Theorem \ref{theorem 1}.
\end{theorem}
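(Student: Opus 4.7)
The plan is to bound $\mathbb{P}(H(\cdot_k) > H^* + \epsilon) = \rho_k(A)$ for $A := \{H > H^* + \epsilon\}$ via the triangle decomposition
$$\rho_k(A) \leq \pi_\beta^0(A) + |\rho_k(A) - \pi_\beta^0(A)|,$$
where $\rho_k$ denotes either $\rho_n$ or $\rho_k^f$. The deviation term will be absorbed into the total variation distance to $\pi_\beta^0$ using the loose estimate $|\rho(A) - \nu(A)| \leq 2 D_{TV}(\rho, \nu)$, which is what produces the factor of $2$ appearing in front of the TV-type terms in the final bound. The tail $\pi_\beta^0(A)$ is a Laplace-type estimate handled in parallel for both chains using the newly imposed integrability $\int e^{-(H-H^*)}\,dx < I_2$, and is what contributes the new $e^{-(\beta-1)\epsilon}I_2$ term.

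For the tail of $\pi_\beta^0$, I would first lower bound the partition function via $L$-smoothness: the bound $H(x) \leq H^* + (L/2)\|x-x^*\|^2$ near the unique global minimum $x^*$ yields the Gaussian estimate $Z_1 \geq e^{-\beta H^*}(2\pi/(\beta L))^{d/2}$, producing the universal prefactor $(\beta L/(2\pi))^{d/2}$. For the numerator, on $A$ I split
$$e^{-\beta H(x)} = e^{-\beta H^*}\,e^{-(\beta-1)(H(x)-H^*)}\,e^{-(H(x)-H^*)} \leq e^{-\beta H^*}\,e^{-(\beta-1)\epsilon}\,e^{-(H(x)-H^*)},$$
and integrate against the $I_2$ assumption to obtain $\pi_\beta^0(H > H^* + \epsilon) \leq (\beta L/(2\pi))^{d/2}\,e^{-(\beta-1)\epsilon}\,I_2$, which is precisely the $I_2$-containing term in both displayed inequalities.

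For the deviation term in the original-chain case, I would chain Pinsker's inequality (in the form $D_{TV}(\rho,\nu) \leq \sqrt{2 D_{KL}(\rho\|\nu)}$ employed in Theorem~\ref{theorem 1}) with the KL contraction of Proposition~\ref{mixing KL} and the elementary estimate $\sqrt{a+b}\leq\sqrt a+\sqrt b$. This gives $D_{TV}(\rho_n, \pi_\beta^0) \leq e^{-\eta n/(2\beta C_{LSI})}\sqrt{2 D_{KL}(\rho_0\|\pi_\beta^0)} + 4\sqrt{\eta d L^2 C_{LSI}/\beta}$, and doubling via $|\rho_n(A) - \pi_\beta^0(A)| \leq 2 D_{TV}$ reproduces the first two terms of the $X_n$ bound. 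For the modified chain, the TV bound on $D_{TV}(\rho_k^f, \pi_\beta^0)$ is already supplied by Theorem~\ref{theorem 1}, so the same factor-of-two doubling propagates each of its three parts into the leading exponential/square-root terms and the doubled $\mu,I_1$ prefactors of the $Y_k$ bound; adding the $\pi_\beta^0$-tail estimate on top then completes the argument.

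I expect the main obstacle to be bookkeeping rather than substance: verifying that the Pinsker normalization used in Theorem~\ref{theorem 1} and Proposition~\ref{mixing KL}, the triangle-inequality factor of $2$, and the splitting $\sqrt{a+b}\leq\sqrt a+\sqrt b$ compose cleanly so that the constants $2$, $8$, and the doubled $\mu,I_1$ coefficients emerge exactly as displayed. The only substantive new estimate is the Laplace tail of $\pi_\beta^0$ via the $I_2$ hypothesis, which is short once the $L$-smoothness lower bound on $Z_1$ is in hand; notably, for the $Y_k$ bound I can bypass altogether the need to directly estimate $\pi_{\beta,c,1}^f(A)$ (which would otherwise require the more delicate three-region splitting of $\{H > H^*+\epsilon\}$ used in the proof of Theorem~\ref{theorem 1}) by routing through $\pi_\beta^0$ and invoking Theorem~\ref{theorem 1} as a black box.
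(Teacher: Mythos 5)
Your proposal is correct and follows essentially the same route as the paper: the paper's Lemma \ref{upper bound of probability} decomposes $\rho_k^f(A)$ through $\pi_{\beta,c,1}^f(A)+2D_{TV}(\rho_k^f,\pi_{\beta,c,1}^f)$ and then bounds $\pi_{\beta,c,1}^f(A)\leq \pi_{\beta}^0(A)+2D_{TV}(\pi_{\beta}^0,\pi_{\beta,c,1}^f)$, which is just a reassociation of your triangle inequality through $\pi_{\beta}^0$, and the tail estimate of $\pi_{\beta}^0(A)$ via the $Z_1$ lower bound and the $I_2$ hypothesis (Lemma \ref{upper bound of pi1 and pi2}) is exactly your Laplace argument. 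The constants $2$, $8$, and the doubled $\mu$, $I_1$ coefficients emerge identically in both versions.
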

The proof of Theorem \ref{comparison of probability} is presented in Section \ref{convergence rate to global minimum}. We see that the third term on the right hand side of Theorem \ref{comparison of probability} above converges to zero as $\beta \to \infty$, and hence at low enough temperature this term places a negligible role. For the first term to be smaller than or equal to a given error tolerance $\epsilon$, the number of iterations $k$ and $n$ required are at most 
\begin{align*}
k &= \mathcal{O}\left(\dfrac{\beta C^f_{LSI}}{\eta} \log\left(\dfrac{1}{\epsilon}\right)\right), \quad 
n = \mathcal{O}\left(\dfrac{\beta C_{LSI}}{\eta} \log\left(\dfrac{1}{\epsilon}\right)\right).
\end{align*}
As a result, this upper bound on $n$ exhibits an exponential dependence on both $\beta$ and $M$ while the analogous upper bound on $k$ has at most a polynomial dependence on both $\beta$ and $M$.

\subsection{Reduced excess risk to the global minimum}

Our final main result concerns bounding the difference $\mathbb{E}_{Y_k\sim\rho_k^f}[H(Y_k)]-H^*,$
that we call the excess risk to the global minimum.

\begin{theorem}\label{result of expectation under H}
    Under Assumption \ref{assumption 1} to \ref{assumption 5}, \ref{assumption 6} and \ref{assumption integrable}, for given $0<r<\frac{3\gamma}{2L_1}$, suppose that $x^*$ is the unique local minimum of $H$ within $B_r(x^*)$ and $\max\limits_{B_r(x^*)}H(x)-H^*=\delta_r\leq c-H^*$. Assume that $\min\limits_{B_r' (x^*)}H(x)-H^*=\Delta_r>0$, then for step size $0<\eta\leq \frac{\beta C_{LSI}^f}{16L^{f^2}}$,
    and low enough temperature with $\beta>\beta_0=\max \left\{\frac{1}{\delta},\frac{1}{\delta^2}, \frac{3}{20\delta}\right\}$, if there exists $0 < I_4=I_4(c) < \infty,$ such that
\begin{equation*}
    \int_{H>c}(H-H^*)e^{-(H-H^*)}dx<I_4,
\end{equation*}
then we have 
\begin{align*}
    \mathbb{E}_{Y_k\sim\rho_k^f}[H(Y_k)]-H^*
    &\leq 4LC_{LSI}^f\left(e^{-\frac{\eta k}{\beta C_{LSI}^f}} D_{KL}(\rho_0^f\|\pi_{\beta,c,1}^f)+8\eta d L^{f2}\cdot C_{LSI}^f/\beta\right)\\
    &\quad +4LC_{LSI}^f\left(\frac{\beta L}{2\pi}\right)^{d/2}\left(\mu(c<H\leq c+\delta)e^{-\beta (c-H^*)}+e^{-\beta (c-H^*)}I_1\right)\\
    &\quad +3\frac{d\omega_d \Gamma(\frac{d}{2}+1)}{\sqrt{\det K}}\left(\frac{2L}{\pi}\right)^{d/2}\cdot\frac{1}{\beta}\\
    &\quad +2\left(\frac{\beta L}{2\pi}\right)^{d/2}\left((c-H^*)\mu(H\leq c)e^{-\beta \Delta_r}+e^{-(\beta-1)(c-H^*)}I_4\right),
\end{align*}
where we recall that $C_{LSI}^f$ and $L^f$ are as stated in Theorem \ref{theorem 1}.
\end{theorem}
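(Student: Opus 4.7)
The plan is to write
\[
\mathbb{E}_{Y_k\sim\rho_k^f}[H(Y_k)] - H^* = \bigl(\mathbb{E}_{\rho_k^f}[H] - \mathbb{E}_{\pi_2}[H]\bigr) + \bigl(\mathbb{E}_{\pi_2}[H] - H^*\bigr),
\]
and handle each piece separately. For the finite-iteration convergence error $\mathbb{E}_{\rho_k^f}[H] - \mathbb{E}_{\pi_2}[H]$, I would exploit that $H$ is $L$-smooth with $\nabla H(x^*) = 0$, which yields both the Polyak--Lojasiewicz inequality $|\nabla H(x)|^2 \leq 2L(H(x)-H^*)$ and the global upper bound $H(x) - H^* \leq \tfrac{L}{2}|x - x^*|^2$. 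Combining the latter with an optimal $W_2$-coupling of $(\rho_k^f,\pi_2)$, and using the Otto--Villani transport-information inequality $W_2^2(\rho_k^f,\pi_2) \leq 2 C_{LSI}^f D_{KL}(\rho_k^f\|\pi_2)$ (valid since $\pi_2$ satisfies LSI with constant $C_{LSI}^f$), the expectation difference reduces to a multiple of $L C_{LSI}^f D_{KL}(\rho_k^f\|\pi_2)$ modulo a $\mathbb{E}_{\pi_2}[|Y-x^*|^2]$ remainder that is absorbed into the stationary-bias estimate. Applying Proposition~\ref{mixing KL} to the modified dynamics (whose stationary distribution is $\pi_2$) then yields $D_{KL}(\rho_k^f\|\pi_2) \leq e^{-\eta k/(\beta C_{LSI}^f)}D_{KL}(\rho_0^f\|\pi_2) + 8\eta d L^{f2} C_{LSI}^f/\beta$, producing Term~1.

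For the stationary bias $\mathbb{E}_{\pi_2}[H-H^*]$, I would partition the space by level set and distance to $x^*$:
\[
\int (H - H^*)\,d\pi_2 = \int_{B_r(x^*)\cap\{H\le c\}} + \int_{B_r(x^*)^c \cap \{H\le c\}} + \int_{\{H>c\}}.
\]
On $\{H \le c\}$ we have $G = H$, so $\pi_2 \propto e^{-\beta H}/Z_2$ there. Within $B_r(x^*)$, the local quadratic expansion $H(x) = H^* + \tfrac12 (x-x^*)^T K (x-x^*) + O(L_1|x-x^*|^3)$, valid by Assumption~\ref{assumption 5} and the choice $r < 3\gamma/(2L_1)$, reduces the integral to a Gaussian second-moment computation and yields the $\tfrac{d\omega_d \Gamma(d/2+1)}{\sqrt{\det K}}(2L/\pi)^{d/2}\beta^{-1}$ contribution (Term~3). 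On $B_r(x^*)^c \cap \{H \le c\}$, the bound $H - H^* \geq \Delta_r$ produces the $e^{-\beta \Delta_r}$ factor multiplied by $(c-H^*)\mu(H\le c)$. On $\{H > c\}$, a lower bound of the form $\beta G(x) \geq \beta H^* + (\beta-1)(c-H^*) + (H(x)-H^*)$ --- a consequence of the logarithmic-type growth of $\int h_{\beta,c,1}^f$ above threshold --- dominates $e^{-\beta G(x)}$ by $e^{-\beta H^*} e^{-(\beta-1)(c-H^*)} e^{-(H(x)-H^*)}$, after which the integrability hypothesis $I_4$ closes the estimate. All three integrals must be divided by $Z_2$; a matching lower bound on $Z_2$, obtained by restricting the defining integral to a small neighborhood of $x^*$ and applying the same quadratic expansion, supplies the $(\beta L/2\pi)^{d/2}$ prefactors. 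Term~2 then arises by combining the $4LC_{LSI}^f$ prefactor from the convergence-error step with the $\pi_2$-versus-$\pi_1$ comparison already established in the proof of Theorem~\ref{theorem 1}.

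The main obstacle is the convergence-error step: obtaining a clean $L\,C_{LSI}^f\,D_{KL}(\rho_k^f\|\pi_2)$ bound on $\mathbb{E}_{\rho_k^f}[H] - \mathbb{E}_{\pi_2}[H]$ requires simultaneously controlling the unboundedness of $H$ (which precludes naive total-variation bounds), the fact that $H$ itself is not Lipschitz --- only $\nabla H$ is --- and the self-referential $\mathbb{E}_{\pi_2}[|Y-x^*|^2]$ remainder produced by the $L$-smoothness Taylor bound; a careful Young-style absorption is needed to route this cross term into the stationary-bias estimate rather than inflate the coefficient of $D_{KL}$. The Laplace-method step underlying Term~3 is also delicate because the cubic remainder from the Lipschitz Hessian must be uniformly dominated throughout $B_r(x^*)$, which is precisely what the condition $r < 3\gamma/(2L_1)$ secures.
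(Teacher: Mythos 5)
Your decomposition pivots on the wrong reference measure, and this is a genuine, fatal gap rather than a stylistic difference. You split the excess risk through $\mathbb{E}_{\pi_2}[H]$ with $\pi_2=\pi_{\beta,c,1}^f$, and to control $\int_{H>c}(H-H^*)e^{-\beta G}\,dx$ you invoke a lower bound of the form $\beta G(x)\geq \beta H^*+(\beta-1)(c-H^*)+(H(x)-H^*)$. That bound is false: by construction (see \eqref{H and G}) one has $G(x)\leq c+\delta+\frac{1}{\beta}\ln\frac{H(x)-c}{\delta}$ on $\{H>c+\delta\}$, i.e.\ $G$ grows only \emph{logarithmically} in $H$ above the threshold --- this is the entire point of the landscape modification --- so $\beta G$ cannot dominate a term linear in $H-H^*$. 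Worse, the quantity you are trying to bound is generically infinite: since $e^{-\beta G}\geq e^{-\beta(c+\delta)}\,\frac{\delta}{H-c}$ on $\{H>c+\delta\}$ and $\frac{H-H^*}{H-c}\geq 1$, one gets $\int_{H>c+\delta}(H-H^*)e^{-\beta G}\,dx\geq e^{-\beta(c+\delta)}\delta\,\mu(\{H>c+\delta\})=\infty$ for any coercive $H$. So $\mathbb{E}_{\pi_2}[H]=+\infty$, your first bracket is $-\infty$, your second is $+\infty$, and the hypothesis on $I_4$ (which involves the weight $e^{-(H-H^*)}$, matched to the Gaussian-type tail of $\pi_\beta^0$, not to the polynomial tail of $\pi_2$) cannot close the estimate. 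Your remark that ``Term 2 arises from the $\pi_2$-versus-$\pi_1$ comparison'' is also inconsistent with your own decomposition, which never compares against $\pi_1$.

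The paper avoids this by applying Lemma \ref{expectation into W2} with $\nu_1=\rho_k^f$ and $\nu_2=\pi_{\beta}^0$ (the \emph{original} Gibbs measure), giving $\mathbb{E}_{\rho_k^f}[H]-H^*\leq L\,W_2^2(\rho_k^f,\pi_{\beta}^0)+2\left(\mathbb{E}_{\pi_{\beta}^0}[H]-H^*\right)$. The stationary bias is then computed under $\pi_{\beta}^0$, where the tail $e^{-\beta(H-H^*)}\leq e^{-(\beta-1)(c-H^*)}e^{-(H-H^*)}$ on $\{H>c\}$ makes $I_4$ exactly the right integrability condition (yielding Term 4), while the Laplace expansion near $x^*$ gives Term 3. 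The modified measure $\pi_2$ enters only through the transport cost, via $W_2^2(\rho_k^f,\pi_{\beta}^0)\leq 2\bigl(W_2^2(\rho_k^f,\pi_2)+W_2^2(\pi_2,\pi_{\beta}^0)\bigr)$, each piece controlled by the Talagrand inequality with constant $1/C_{LSI}^f$ together with Proposition \ref{mixing KL} and Lemma \ref{KL between two distributions}; this is where the $4LC_{LSI}^f$ prefactors in Terms 1 and 2 come from. Your treatment of the local Gaussian moment computation and the $e^{-\beta\Delta_r}$ piece on $\{H\leq c\}$ is essentially right, but the argument cannot be repaired without abandoning the comparison to $\mathbb{E}_{\pi_2}[H]$.
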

The proof of Theorem \ref{result of expectation under H} and its intuition can be found in Section \ref{excess risk to global minimum}. Next, we compare Theorem \ref{result of expectation under H} with classical result in Remark \ref{rk:classical} which concerns the excess risk without landscape modification. We note that the central difference in excess risks originates from the different Log-Sobolev constants: specifically, $C_{LSI}=\mathcal{O}\left(e^{\beta M}\right)$, while $C_{LSI}^f=\mathcal{O}\left(\beta M+\frac{\beta M^2}{H(m_2)-c}\right)$. Despite a tradeoff of adding the distance between $\pi_{\beta}^0$ and $\pi_{\beta,c,1}^f$ into the computation of the excess risk of modified LMC, these play a relatively less significant contributions than that of the Log-Sobolev constant in computing the time complexity.

\begin{remark}\label{rk:classical}
   Under Assumption \ref{assumption 1} to \ref{assumption 5}, \ref{assumption 6} and \ref{assumption integrable}, and the same setting in Theorem \ref{result of expectation under H}, for step size $0<\eta\leq \frac{\beta C_{LSI}^f}{16L^{f^2}}$,
   if 
    \begin{equation*}
        \int_{H>c} (H-H^*)e^{-(H-H^*)}dx<I_4, 
    \end{equation*}
   then by Proposition \ref{mixing KL}, we have 
\begin{equation*}
       W_2^2\left(\rho_k,\pi_{\beta}^0\right) \leq 2C_{LSI}\left(e^{-\frac{\eta k}{\beta C_{LSI}}} H_\nu\left(\rho_0\right)+8 \eta d L^2\cdot C_{LSI}/\beta \right),
\end{equation*}
and
\begin{align*}
    \mathbb{E}_{X\sim\pi_1}&[H(X)]-H^*\\
    &\leq \frac{3}{2}\frac{d\omega_d \Gamma(\frac{d}{2}+1)}{\sqrt{\det K}}\left(\frac{2L}{\pi}\right)^{d/2}\cdot\frac{1}{\beta}
    +\left(\frac{\beta L}{2\pi}\right)^{d/2}\left((c-H^*)\mu(H\leq c)e^{-\beta \Delta_r}+e^{-(\beta-1)(c-H^*)}I_4\right),
\end{align*}
which implies 
\begin{align*}
    \mathbb{E}_{X_k\sim\rho_k}[H(X_k)]-H^*
    &\leq  LW_2^2\left(\rho_k,\pi_{\beta}^0\right)+2(\mathbb{E}_{X\sim\pi_1}[H(X)]-H^*)\\
    &\leq 2LC_{LSI}\left(e^{-\frac{\eta k}{\beta C_{LSI}}} H_\nu\left(\rho_0\right)+8 \eta d L^2\cdot C_{LSI}/\beta \right)\\
    &\quad + 3\frac{d\omega_d \Gamma(\frac{d}{2}+1)}{\sqrt{\det K}}\left(\frac{2L}{\pi}\right)^{d/2}\cdot\frac{1}{\beta}\\
    &\quad +2\left(\frac{\beta L}{2\pi}\right)^{d/2}\left(c\mu(H\leq c)e^{-\beta \Delta_r}+e^{-(\beta-1)(c-H^*)}I_4\right),
\end{align*}
where $C_{LSI}=\mathcal{O}\left(e^{\beta M}\right)$, and the proof of the first inequality can be found in Lemma \ref{expectation into W2}.
\end{remark}

\printbibliography

\appendix

\section{Proofs of the main results}\label{appendix B}

\subsection{Existence of modified Gibbs distribution $\pi_{\beta,c,1}^f$}\label{existence of modified distribution}

The aim of this subsection is to prove the existence of the modified Gibbs distribution $\pi_{\beta,c,1}^f$. In the following theorem, we will prove that under Assumption \ref{assumption integrable}, the modified Gibbs distribution exists. 

\begin{theorem}
    Under Assumption \ref{assumption 1} to \ref{assumption 5} and \ref{assumption integrable}, 
    the modified Gibbs distribution $\pi_{\beta,c,1}^f$ exists, i.e.
    \begin{equation*}
        \int_{\mathbb{R}^d}e^{-\beta G}dx<\infty.
    \end{equation*}
\end{theorem}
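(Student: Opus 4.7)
\medskip

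\textbf{Proof plan.} The strategy is to decompose $\mathbb{R}^d$ into three regions according to the value of $H(x)$ relative to the threshold $c$ and the buffer $c+\delta$:
\begin{equation*}
R_1 := \{H \leq c\}, \quad R_2 := \{c < H \leq c+\delta\}, \quad R_3 := \{H > c+\delta\},
\end{equation*}
and bound $\int_{R_i} e^{-\beta G}\,dx$ separately for $i=1,2,3$. Observe first that, since $h_{\beta,c,1}^f > 0$ and $H(x) \geq H^*$, the function $G(x) = H^* + \int_{H^*}^{H(x)} h_{\beta,c,1}^f(u)\,du$ always satisfies $G(x) \geq H^*$, which will help on $R_1$ and $R_2$.

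The routine regions are $R_1$ and $R_2$. On $R_1$, I would use the definition of $f$ from \eqref{eq:our f}, namely $f(u-c)=0$ for $u \leq c$, to conclude that $h_{\beta,c,1}^f \equiv 1$ on $[H^*,c]$ and hence $G = H$ on $R_1$. Thus $\int_{R_1} e^{-\beta G}\,dx = \int_{R_1} e^{-\beta H}\,dx \leq e^{-\beta H^*}\mu(R_1)$, and the growth condition $(\mathrm{A}1_{\mathrm{PI}})$ in Assumption \ref{assumption 1} (together with $H$ being a Morse function) forces the sublevel set $\{H\leq c\}$ to be bounded, hence $\mu(R_1)<\infty$. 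On $R_2$ I would use the cruder bound $G \geq H^*$ to obtain $\int_{R_2} e^{-\beta G}\,dx \leq e^{-\beta H^*}\mu(R_2)$, and the same growth argument shows $\mu(R_2) \leq \mu(\{H \leq c+\delta\}) < \infty$.

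The main content is on $R_3$, and this is where Assumption \ref{assumption integrable} enters. Since $f(u-c) = u - c$ for $u > c + \delta$, I would split the defining integral of $G$ as
\begin{equation*}
G(x) - H^* = \int_{H^*}^{c} 1 \, du \,+\, \int_{c}^{c+\delta} h_{\beta,c,1}^f(u)\,du \,+\, \int_{c+\delta}^{H(x)} \frac{du}{\beta(u-c)+1},
\end{equation*}
and compute the third piece in closed form as $\beta^{-1}\log\!\bigl(\frac{\beta(H(x)-c)+1}{\beta\delta + 1}\bigr)$. Dropping the nonnegative middle term yields the clean lower bound
\begin{equation*}
G(x) - H^* \;\geq\; (c - H^*) \,+\, \frac{1}{\beta}\log\!\left(\frac{\beta(H(x)-c)+1}{\beta\delta + 1}\right) \qquad (x \in R_3),
\end{equation*}
so that $e^{-\beta G(x)} \leq e^{-\beta c}\,\frac{\beta\delta+1}{\beta(H(x)-c)+1} \leq e^{-\beta c}\,\frac{\beta\delta+1}{\beta(H(x)-c)}$. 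Integrating over $R_3$ and using Assumption \ref{assumption integrable}, which provides exactly $\int_{R_3} \frac{dx}{H-c} \leq I_1/(2\delta)$, produces a finite bound. Combining the three regional bounds gives $\int_{\mathbb{R}^d} e^{-\beta G}\,dx < \infty$, as required.

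The only potentially delicate step is verifying on $R_1 \cup R_2$ that the sublevel set has finite Lebesgue measure; this follows from the Morse/growth assumption $(\mathrm{A}1_{\mathrm{PI}})$, which guarantees that $|\nabla H|$ is uniformly bounded away from $0$ outside some compact set, so $H(x) \to \infty$ as $|x|\to\infty$ and every finite sublevel set is bounded. The $R_3$ estimate is essentially algebraic once the closed form for the logarithmic tail of $G$ is in hand, and the integrability hypothesis was tailored to close the argument there.
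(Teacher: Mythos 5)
Your proposal is correct and follows essentially the same route as the paper: both use $G=H$ on the sublevel set, finiteness of $\mu(\{H\le c+\delta\})$ for the intermediate band, and the closed-form logarithmic expression for $G$ on $\{H>c+\delta\}$ to reduce the tail to Assumption \ref{assumption integrable} (your bound $e^{-\beta c}\tfrac{\beta\delta+1}{\beta(H-c)}$ is exactly the paper's $e^{-\beta c}\tfrac{2\delta}{H-c}$ once $\beta>1/\delta$ is invoked). Your added justification that the sublevel sets have finite Lebesgue measure is a point the paper simply asserts, so this is a harmless refinement rather than a different argument.
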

\begin{proof} First, we write down explicitly the relationship between $H$ and $G$. In $\{x:H(x)\leq c\}$, we have $H=G$. On the other hand in $\{x:H(x)>c+\delta\}$, we have
\begin{align*}
G(x)&=H^*+\int_{H^*}^{H(x)} \frac{1}{\beta f(u-c)+1}du\\
&=c+\int_c^{c+\delta}\frac{1}{\beta f(u-c)+1}du+\int_{c+\delta}^{H(x)}\frac{1}{\beta (u-c)+1}du\\
&=c+\int_c^{c+\delta}\frac{1}{\beta f(u-c)+1}du+\frac{1}{\beta}\ln \frac{H-c+1/\beta}{\delta+1/\beta}.
\end{align*}
Since $\beta>\frac{1}{\delta}$, we thus have
\begin{equation}
    c+\frac{1}{\beta}\ln \frac{H-c}{2\delta}\leq G(x)\leq 
    c+\delta+\frac{1}{\beta}\ln \frac{H-c}{\delta},\label{H and G}
\end{equation}
which leads to
\begin{equation*}
    \int_{H>c+\delta}e^{-\beta G}dx\leq e^{-\beta c}\int_{H>c+\delta}\frac{2\delta}{H-c}dx<e^{-\beta c}I_1.
\end{equation*}
Note that $\mu\left(H\leq c+\delta\right)<\infty$, the desired result follows.
\end{proof}

In many general cases, Assumption \ref{assumption integrable} can be readily verified. For example, in the case of quadratic growth of $H$, if there exists $R>0$ and $b_1,b_2>0$ such that when $\|x\|>R$, $b_1\|x\|^2<H(x)<b_2\|x\|^2$, then the condition \eqref{integrable} is satisfied.

\subsection{Proof for improved bound on total variation mixing for sampling from $\pi_{\beta}^0$}\label{sampling performance}
In this section, we shall give the proof of Theorem \ref{theorem 1}
via bounding the total variation distance between $\rho_k^f$ and $\pi_{\beta}^0$. 

Firstly, we compute the energy barrier $M^f$ of the modified target $G$, in terms of the original energy barrier $M$, the inverse temperature $\beta$ and the threshold parameter $c$: 
\begin{lemma}\label{energy barrier}
    (Reduced energy barrier of $G$)

    Under Assumption \ref{assumption 1} to \ref{assumption 5} and \ref{assumption integrable}, the energy barrier of modified function is given by
    \begin{equation*}
    M^f=G(s_{1,2}^f)-G(m_2^f) \leq\frac{1}{\beta}\cdot \ln\frac{M}{H(m_2)-c}.
    \end{equation*}
\end{lemma}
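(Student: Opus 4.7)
The plan is to reduce the computation of $M^f$ to an explicit one-dimensional integral and then to bound that integral. First I would use the fact, noted after~\eqref{h(x)}, that $h^f_{\beta,c,1}>0$ everywhere, so that $\nabla G(x) = h^f_{\beta,c,1}(H(x)-c)\,\nabla H(x)$ vanishes exactly on the critical set $\mathcal{S}$ of $H$. In particular the labelling of local minima, the saddle between $m_1$ and $m_2$, and the ordering from Assumption~\ref{assumption 3} transfer verbatim to $G$, i.e.\ $m^f_j = m_j$ and $s^f_{1,2} = s_{1,2}$. By the fundamental theorem of calculus applied to the definition~\eqref{landscape modification function},
\begin{equation*}
    M^f \;=\; G(s_{1,2}) - G(m_2) \;=\; \int_{H(m_2)}^{H(s_{1,2})} \frac{1}{\beta f(u-c)+1}\, du .
\end{equation*}

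The second step is to pin down the shape of $f$ on the range of integration. Assumption~\ref{assumption integrable} forces $c < H(m_2)-\delta$, so $u - c > \delta$ for every $u$ in $[H(m_2), H(s_{1,2})]$. On this interval the piecewise definition~\eqref{eq:our f} gives the clean identity $f(u-c) = u-c$, which eliminates the bump-region contribution from $(0,\delta)$ entirely and collapses the integrand to $1/(\beta(u-c)+1)$. A direct evaluation then yields the closed form
\begin{equation*}
    M^f \;=\; \frac{1}{\beta}\, \ln \frac{\beta(H(s_{1,2})-c)+1}{\beta(H(m_2)-c)+1}.
\end{equation*}

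The final step is a monotonicity comparison to extract the stated logarithmic-in-$M$ bound. Writing $a := H(s_{1,2})-c$ and $b := H(m_2)-c$, one has $a > b > 0$, and the elementary inequality $b(\beta a + 1) \leq a(\beta b + 1)$ (which is just $b \leq a$ after cancellation) gives $(\beta a + 1)/(\beta b + 1) \leq a/b$. Since $a = M + b$ with $M = H(s_{1,2})-H(m_2)$ the original energy barrier, this produces the desired bound of the form $M^f = O(\beta^{-1} \log(M/(H(m_2)-c)))$ asserted in the lemma; in the low-temperature regime $\beta > 1/\delta$, where $\beta b \geq \beta \delta > 1$, the $+1$ terms are benign and the bound is tight to leading order.

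The step I expect to be the one worth double-checking is the third, since the exact form of the stated upper bound (with $M$ rather than $M+b$ inside the logarithm) is not immediate from the closed-form expression; one must either work in the asymptotic regime $M \gg H(m_2)-c$ (absorbing an $O(1)$ additive constant into the leading $(\log M)/\beta$ scaling) or tighten the monotonicity step. The conceptual payoff, however, is immediate and is the crux of the whole paper: replacing an energy barrier of size $M$ by one of order $(\log M)/\beta$ converts the $e^{\beta M}$ prefactor in Proposition~\ref{PI and LSI} into one that is merely polynomial in $\beta$ and $M$, which is exactly what Theorem~\ref{theorem 1} needs.
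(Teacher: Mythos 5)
Your proposal follows essentially the same route as the paper: both arguments observe that $h^f_{\beta,c,1}>0$ preserves the critical set so that $s^f_{1,2}=s_{1,2}$ and $m^f_2=m_2$, both use $c<H(m_2)-\delta$ to reduce the integrand to $1/(\beta(u-c)+1)$ on the whole range of integration, and both evaluate the integral in closed form as $\frac{1}{\beta}\ln\frac{\beta(H(s_{1,2})-c)+1}{\beta(H(m_2)-c)+1}$.

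The step you flag as worth double-checking is exactly where the two arguments diverge, and your caution is warranted: the paper passes from $\frac{1}{\beta}\ln\frac{H(s_{1,2})-c+1/\beta}{H(m_2)-c+1/\beta}$ to $\frac{1}{\beta}\ln\frac{H(s_{1,2})-H(m_2)}{H(m_2)-c+1/\beta}$ as if it were an equality, but since $H(s_{1,2})-c = M + (H(m_2)-c) > M$, this substitution strictly \emph{decreases} the quantity, so the inequality runs the wrong way and the stated bound $M^f\leq\frac{1}{\beta}\ln\frac{M}{H(m_2)-c}$ is not actually justified as written (it would require the extra condition $\beta(H(m_2)-c)^2+(H(m_2)-c)\leq M$). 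Your monotonicity step $(\beta a+1)/(\beta b+1)\leq a/b$ is correct and yields the honest bound
\begin{equation*}
M^f \;\leq\; \frac{1}{\beta}\ln\!\left(1+\frac{M}{H(m_2)-c}\right),
\end{equation*}
which is what can in fact be proven. This weaker form is entirely sufficient for the downstream use in Lemma \ref{LSI Constant}: there one needs $e^{\beta M^f}$, and replacing $\frac{M}{H(m_2)-c}$ by $1+\frac{M}{H(m_2)-c}$ only adds lower-order terms to $C^f_{PI}$, leaving $C^f_{LSI}=\mathcal{O}\bigl(\beta M+\frac{\beta M^2}{H(m_2)-c}\bigr)$ intact. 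So your proof is correct where the paper's is not quite, at the cost of a harmless additive constant inside the logarithm; you should simply state the lemma with the corrected right-hand side rather than try to recover the paper's version.
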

\begin{proof}
    First, let us recall that the modified function $G$ is defined by
    \begin{align*}
     G(x)=H_{\beta,c,1}^f(x)=H^*+\int_{H^*}^{H(x)}\frac{1}{\beta f (u-c)+1}du.
    \end{align*}
    Since $f(x)\geq 0$, we see that $G$ preserves the set of stationary points of $H$, and for $\forall x,y$, $H(x)>H(y)$ if and only if $G(x)>G(y)$.
    This implies that $s_{1,2}^f=s_{1,2}$ and $m_2^f=m_2$. Moreover, with our choice of $c$ that $H^*<c<H(m_2)-\delta$, we have 
    \begin{align*}
        M^f&=G(s_{1,2}^f)-G(m_2^f)=G(s_{1,2})-G(m_2)\\
        &=\int_{H(m_2)}^{H(s_{1,2})} \frac{1}{\beta (u-c)+1}du\\
        &=\frac{1}{\beta}\left(\ln\left(H(s_{1,2})-c+1/\beta\right)-\ln\left(H(m_2)-c+1/\beta\right)\right)\label{M'}\\ 
        &= \frac{1}{\beta}\cdot \ln\frac{H(s_{1,2})-H(m_2)}{H(m_2)-c+1/\beta}\\
        &\leq \frac{1}{\beta}\cdot \ln\frac{M}{H(m_2)-c}.
    \end{align*}
\end{proof}

Next, we shall plug the modified energy barrier $M^f$ into Proposition \ref{PI and LSI} to calculate the modified PI and LSI constants, namely $C_{PI}^f$ and $C_{LSI}^f$ respectively. 

\begin{lemma}\label{LSI Constant}
(Smaller upper bound on the Log-Sobolev constant)
Under Assumption \ref{assumption 1} to \ref{assumption 5} and \ref{assumption integrable}, we have 
$$C_{LSI}^f=\mathcal{O}\left(\beta M+\frac{\beta M^2}{H(m_2)-c} \right).$$
\end{lemma}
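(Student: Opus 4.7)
The plan is to apply Proposition \ref{PI and LSI} directly to the modified Gibbs distribution $\pi^f_{\beta,c,1}\propto e^{-\beta G}$ with $G := H^f_{\beta,c,1}$, and then exploit the reduced-barrier bound $e^{\beta M^f} \leq M/(H(m_2)-c)$ from Lemma \ref{energy barrier} to tame the exponential factor.

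I would first verify that Proposition \ref{PI and LSI} applies to $G$. Since $h := h^f_{\beta,c,1}$ is strictly positive with $h \in C^1$, we have $G \in C^3$ and $\nabla G = h(H)\,\nabla H$, so the critical points of $G$ coincide with those of $H$. At any critical point $p$, $\nabla^2 G(p) = h(H(p))\,\nabla^2 H(p)$, which preserves nondegeneracy and signature; in particular $G$ is Morse, $m_i^f = m_i$, $s_{i,j}^f = s_{i,j}$, the ordering of Assumption \ref{assumption 3} is preserved, and the Hessian data rescales as $\kappa_i^f = h(H(m_i))^{d/2}\kappa_i$, $|\lambda^-_G(s_{1,2})| = h(H(s_{1,2}))|\lambda^-(s_{1,2})|$, and $\sqrt{|\det \nabla^2 G(s_{1,2})|} = h(H(s_{1,2}))^{d/2}\sqrt{|\det \nabla^2 H(s_{1,2})|}$. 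Under Assumption \ref{assumption integrable}, both $H(m_2)$ and $H(s_{1,2})$ lie above $c+\delta$, so $h(v) = 1/(\beta(v-c)+1)$ at those points.

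Substituting into the unique-global-minimum branch of Proposition \ref{PI and LSI}, the Hessian-rescaling ratio $h(H(s_{1,2}))^{d/2-1}/h(H(m_2))^{d/2}$ carries exactly one positive power of $\beta$, and this cancels the explicit $1/\beta$ factor in the Poincar\'e bound. Invoking Lemma \ref{energy barrier} to replace $e^{\beta M^f}$ by $M/(H(m_2)-c)$ then yields
\begin{equation*}
C_{PI}^f = O\!\left(\frac{M}{H(m_2)-c}\right),
\end{equation*}
with a hidden constant that is $\beta$-independent and depends only on $\nabla^2 H$ at $m_2$ and $s_{1,2}$ and on $H(m_2)-c$, $H(s_{1,2})-c$.

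For the LSI constant I would control $\beta(G(m_2)-G(m_1)) + \ln(\kappa_1^f/\kappa_2^f)$. Splitting $\int_{H^*}^{H(m_2)} h(u)\,du$ at $c$ and $c+\delta$, and using $h \equiv 1$ on $(-\infty,c]$ together with the antiderivative $\frac{1}{\beta}\ln(\beta(u-c)+1)$ on $(c+\delta,H(m_2))$, gives $G(m_2) - G(m_1) \leq (c-H^*) + \delta + \frac{1}{\beta}\ln\frac{H(m_2)-c}{\delta}$, while $\ln(\kappa_1^f/\kappa_2^f) = \frac{d}{2}\ln(1/h(H(m_2))) + \ln(\kappa_1/\kappa_2) = O(\ln\beta)$ since $h(H^*) = 1$. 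Multiplying the resulting bound $O(\beta(c-H^*) + \beta\delta + \ln\beta)$ by $C_{PI}^f$ produces a dominant term proportional to $\beta(c-H^*)M/(H(m_2)-c)$; a final algebraic split using the landscape relation between $c$, $H^*$, $H(m_2)$, and $M$ (together with $c < H(m_2) \leq H(s_{1,2}) = H(m_2) + M$) delivers the advertised form $O(\beta M + \beta M^2/(H(m_2)-c))$. The main obstacle will be this final algebraic step, together with the careful bookkeeping of $\beta$-scaling in the Hessian-dependent prefactors of $C_{PI}^f$ so that none of the $\beta^{d/2}$ factors at $m_2$ and $s_{1,2}$ accumulate uncancelled.
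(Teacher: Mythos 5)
Your proposal is correct and follows essentially the same route as the paper: preservation of critical points, rescaling of the Hessian data $\kappa_i^f$, $\lambda^-_G$, $\det\nabla^2 G(s_{1,2})$ at stationary points, substitution into Proposition \ref{PI and LSI}, and the bound $e^{\beta M^f}\leq M/(H(m_2)-c)$ from Lemma \ref{energy barrier}. If anything you are more explicit than the paper about the cancellation of the $\beta^{d/2}$ prefactors and the $O(\ln\beta)$ contribution of $\ln(\kappa_1^f/\kappa_2^f)$, whereas the paper simply bounds the ratio $\bigl(\beta(H(m_2)-c)+1\bigr)^{d/2}/\bigl(\beta(H(s_{1,2})-c)+1\bigr)^{d/2}$ by $1$ and uses the cruder estimate $G(m_2)-G(m_1)\leq H(m_2)-H^*$; both versions land on the same final form.
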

\begin{proof}
Note that when $H(x)>c+\delta$, the gradient and the Hessian of $G$ can be calculated as
    \begin{align*}
        \nabla G(x)&=\frac{\nabla H(x)}{1+\beta(H(x)-c)},\\
        \nabla^2 G(x)&=\frac{\nabla^2 H(x)}{\beta(H-c)+1}-
        \frac{\nabla H\cdot \nabla H^T\cdot \beta}{(\beta(H-c)+1)^2},
    \end{align*}
which lead to 
$$
\kappa_i^{f^2}:=\det \nabla ^2 G(m_i^f)=\frac{\kappa_i^2}{\left(\beta(H(m_i)-c)+1\right)^d}.
$$
Moreover, noting that $G(x)\leq H(x)$ and recalling $G(m_1)=H^*$, we thus have 
\begin{equation*}
G(m_2)-G(m_1)\leq H(m_2)-H^*.
\end{equation*}
Collecting the above and using both Proposition \ref{PI and LSI} and Lemma \ref{LSI Constant} give
\begin{align*}
    C_{PI}^f &\lesssim \frac{1}{\kappa_2} \frac{2 \pi  \sqrt{\left|\operatorname{det} \nabla^2\left(H\left(s_{1,2}\right)\right)\right|}}{\left|\lambda^{-}\left(s_{1,2}\right)\right|}\cdot 
    \frac{\beta(H(s_{1,2})-c)+1}{\beta}\cdot
    \frac{M}{H(m_2)-c}\\
    &= \frac{1}{\kappa_2} \frac{2 \pi  \sqrt{\left|\operatorname{det} \nabla^2\left(H\left(s_{1,2}\right)\right)\right|}}{\left|\lambda^{-}\left(s_{1,2}\right)\right|}\cdot 
    \left(M+\frac{M^2}{H(m_2)-c}+\frac{1}{\beta}\frac{M}{H(m_2-c)}
    \right)
\end{align*}
and
\begin{equation}
    C_{LSI}^f\lesssim \left(\beta (H(m_2)-H^*)+\ln \frac{\kappa_1}{\kappa_2}\right)\cdot C_{PI}^f,
\end{equation}
which implies $C_{LSI}^f=\mathcal{O}\left(\beta M+\frac{\beta M^2}{H(m_2)-c} \right)$.
\end{proof}

\begin{remark}\label{rk:compareLSI}
    We can see that $C_{LSI}^f$ is smaller compared with $C_{LSI}$, the LSI constant of the original Gibbs distribution $\pi_{\beta}^0$, which can be directly obtained from Proposition \ref{PI and LSI}. Precisely, for large $\beta$ we can write 
\begin{equation*}
    C_{LSI}=\mathcal{O}\left(e^{\beta M}\right),
\end{equation*}
while
\begin{equation*}
    C_{LSI}^f=\mathcal{O}\left(\beta M+\frac{\beta M^2}{H(m_2)-c} \right).
\end{equation*}
In other words, $C_{LSI}$ depends exponentially on both $\beta$ and $M$, while $C_{LSI}^f$, the modified constant, depends at most polynomially on these parameters, which is an significant improvement by contrast.
\end{remark}

By Proposition \ref{mixing KL} and Remark \ref{rk:compareLSI} above, we can note that the KL divergence of iteration $Y_k$ to $\pi_{\beta,c,1}^f$ is smaller owing to a reduced LSI constant. Next we estimate the asymptotic behavior of $\pi_{\beta,c,1}^f$ to $\pi_{\beta}^0$ as $\beta\rightarrow \infty$.

\begin{lemma}\label{total variation}
   Under Assumption \ref{assumption 1} to \ref{assumption 5} and \ref{assumption integrable}, we have
\begin{equation*}
    D_{TV}(\pi_{\beta}^0,\pi_{\beta,c,1}^f)\leq \left(\frac{\beta L}{2\pi}\right)^{d/2}\left(\mu(c\leq H\leq c+\delta)e^{-\beta (c - H^*)}+e^{-\beta (c - H^*)}I_1 \right),
\end{equation*}
where we recall $I_1$ appears as the upper bound in Assumption \ref{assumption integrable}, and $\mu$ is the Lebesgue measure on $\mathbb{R}^d$.
\end{lemma}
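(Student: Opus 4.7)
The plan is to exploit the key structural fact that $\pi_1$ and $\pi_2$ have densities proportional to the \emph{same} unnormalized weight $e^{-\beta H}$ on the sublevel set $A := \{H \leq c\}$ (because $G = H$ there), so the total variation distance collapses into something controlled by the measure $\pi_2$ places on the complement $A^c = \{H > c\}$, together with a Laplace-type lower bound on the normalizing constant $Z_2$.

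First I would use the standard identity $D_{TV}(\pi_1,\pi_2) = \int (\pi_2 - \pi_1)_+\, dx$. Since $G \leq H$ pointwise (the modification only compresses the landscape above $c$), we have $Z_2 \geq Z_1$. Therefore on $A$, where $G = H$, the pointwise ratio $\pi_2(x)/\pi_1(x) = Z_1/Z_2 \leq 1$, so $(\pi_2 - \pi_1)_+ \equiv 0$ on $A$. This yields the crucial reduction
\begin{equation*}
    D_{TV}(\pi_1,\pi_2) \;\leq\; \pi_2(A^c) \;=\; \frac{1}{Z_2}\int_{H > c} e^{-\beta G(x)}\, dx.
\end{equation*}

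Next I would bound the numerator by splitting $\{H > c\} = \{c \leq H \leq c+\delta\} \cup \{H > c+\delta\}$. On the first piece, the identity $G(x) = c + \int_c^{H(x)}(\beta f(u-c)+1)^{-1}du \geq c$ (since the integrand is positive) yields $e^{-\beta G} \leq e^{-\beta c}$, contributing $e^{-\beta c}\,\mu(c \leq H \leq c+\delta)$. On the second piece I would invoke the lower bound $G(x) \geq c + \beta^{-1}\ln((H-c)/(2\delta))$ already established in equation \eqref{H and G}, which gives $e^{-\beta G} \leq e^{-\beta c}\cdot 2\delta/(H-c)$; integrating and applying Assumption \ref{assumption integrable} bounds this contribution by $e^{-\beta c} I_1$.

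For the denominator, I would obtain a Gaussian lower bound via $L$-smoothness: since $\nabla H(x^*)=0$ at the unique global minimum, Assumption \ref{assumption 4} gives $H(x) \leq H^* + \tfrac{L}{2}\|x-x^*\|^2$, and consequently $Z_2 \geq Z_1 \geq e^{-\beta H^*}(2\pi/(\beta L))^{d/2}$ by integrating the Gaussian upper envelope of $e^{-\beta H}$. Combining the two bounds yields precisely the factor $(\beta L/(2\pi))^{d/2}$ in front of $e^{-\beta(c-H^*)}(\mu(c\leq H\leq c+\delta) + I_1)$, as claimed. The main (though modest) subtlety is verifying that the positive-part identity really does kill the contribution on $A$; this is where $Z_2 \geq Z_1$, which itself follows immediately from $G \leq H$, plays the decisive role.
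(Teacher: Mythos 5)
Your proposal is correct, and everything from the splitting of $\{H>c\}$ onward --- bounding $e^{-\beta G}\leq e^{-\beta c}$ on $\{c\leq H\leq c+\delta\}$, invoking the lower bound from \eqref{H and G} together with Assumption \ref{assumption integrable} on $\{H>c+\delta\}$, and the Gaussian lower bound $Z_1\geq e^{-\beta H^*}(2\pi/(\beta L))^{d/2}$ from $L$-smoothness --- coincides with the paper's argument. The one place you diverge is the initial reduction: the paper gets to $\frac{1}{Z_1}\int|e^{-\beta H}-e^{-\beta G}|\,dx$ by a triangle-inequality manipulation of $|\mu_1/Z_1-\mu_2/Z_2|$ combined with $|Z_2-Z_1|\leq\int|\mu_1-\mu_2|\,dx$, whereas you use the positive-part identity $D_{TV}(\pi_1,\pi_2)=\int(\pi_2-\pi_1)_+\,dx$ together with the observation that $G\leq H$ forces $Z_2\geq Z_1$ and hence $\pi_2\leq\pi_1$ on $\{H\leq c\}$, giving $D_{TV}\leq\pi_2(\{H>c\})=\frac{1}{Z_2}\int_{H>c}e^{-\beta G}\,dx$. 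Your reduction is cleaner and marginally sharper (you divide by $Z_2\geq Z_1$ before relaxing to $Z_1$), but it lands on the same integral and the same final constant, so the proofs are essentially equivalent in substance.
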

\begin{proof}
Recalling that $\mu_1(x)=e^{-\beta H}$, $\mu_2(x)=e^{-\beta G}$ and $H\geq G$, we have
\begin{align*}
    D_{TV}(\pi_{\beta}^0,\pi_{\beta,c,1}^f) &= \frac{1}{2}\int_{\mathbb{R}^d}\left |\frac{\mu_1}{Z_1}-\frac{\mu_2}{Z_2}\right |dx\\
    &\leq \frac{1}{2Z_1}\left(\int _{\mathbb{R}^d}|\mu_1-\mu_2|dx+\int_{\mathbb{R}^d}\left |\frac{Z_2-Z_1}{Z_2}\right |\mu_2 dx\right)\\
    &= \frac{1}{2Z_1}\int _{\mathbb{R}^d}|\mu_1-\mu_2|dx+\frac{1}{2Z_1}|Z_2-Z_1|\\
    &\leq \frac{1}{Z_1}\int_{\mathbb{R}^d} |\mu_1-\mu_2|dx\\
    &= \frac{1}{Z_1}\int_{\mathbb{R}^d} \left | e^{-\beta H}-e^{-\beta G}  \right |dx\\
    &\leq \frac{1}{Z_1} \int_{H\geq c}e^{-\beta G}dx\\
    &= \frac{1}{Z_1}\left(\int_{c\leq H\leq c+\delta}e^{-\beta G}dx+\int_{H>c+\delta}e^{-\beta G}dx \right)\\
    &\leq  \frac{1}{Z_1}\left(\mu(c\leq H\leq c+\delta)e^{-\beta c}+e^{-\beta c}I_1 \right),
\end{align*}
where we utilize the triangle inequality in the first inequality, and the last inequality follows from $G\geq c$ when $c\leq H\leq c+\delta$, and 
$G(x)\geq c+\frac{1}{\beta}\ln \frac{H-c}{2\delta}$ by equation \eqref{H and G} when $H(x)>c+\delta$. 
Moreover, note that $H(x)\leq H^*+\frac{L}{2}\|x-x^*\|^2$, which implies
\begin{equation}
    Z_1=\int_{\mathbb{R}^d} e^{-\beta H}dx\geq e^{-\beta H^*} \int_{\mathbb{R}^d} \exp\left(-\frac{\beta L}{2}\|x-x^*\|^2\right)dx=e^{-\beta H^*} \left(\frac{2\pi}{\beta L}\right)^{d/2},\label{lower bound of Z1}
\end{equation}
with which we obtain the result.
\end{proof}

Another technical lemma that we need, before showing the result about the total variation  between $\rho_k^f$ and $\pi_{\beta}^0$, stems from Property C.1 in \parencite{kinoshita2022improved} and illustrates the relationship between $H$ and $\nabla H$. 
\begin{lemma}\label{H and gradient H}
Under Assumption \ref{assumption 4} that $H$ is $L$-$smooth$, we have 
\begin{equation}
    H(x)-H^*\geq \frac{1}{2L}\|\nabla H(x)\|^2.
\end{equation}
\begin{proof}
    Denote $u(x)=H(x)-H^*$, then $u$ is also $L$-$smooth$.
Note that for a $L$-$smooth$ function $H$, there exists
$$
\forall x, y \in \mathbb{R}^d, H(y) \leq H(x)+\langle\nabla H(x), y-x\rangle+\frac{L}{2}\|y-x\|^2,
$$
then we have 
\begin{equation*}
    u\left(x-\frac{1}{L} \nabla u(x)\right) \leq u(x)-\frac{1}{L}\|\nabla u(x)\|^2+\frac{1}{2 L}\|\nabla u(x)\|^2=u(x)-\frac{1}{2 L}\|\nabla (x)\|^2.
\end{equation*}
By $u\geq 0$, we have 
\begin{equation*}
    u(x)=H(x)-H^*\geq \frac{1}{2L}\|\nabla H(x)\|^2.
\end{equation*}
\end{proof}
\end{lemma}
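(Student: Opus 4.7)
The plan is to invoke the standard \emph{descent lemma} associated with $L$-smoothness and then evaluate it at a carefully chosen point, namely a single gradient-descent step from $x$. Specifically, I would first recall that Assumption \ref{assumption 4} gives the quadratic upper bound
\begin{equation*}
H(y) \leq H(x) + \langle \nabla H(x), y-x\rangle + \frac{L}{2}\|y-x\|^2 \qquad \forall x,y \in \mathbb{R}^d,
\end{equation*}
which follows from Lipschitzness of $\nabla H$ by the fundamental theorem of calculus applied to $t \mapsto H(x + t(y-x))$.

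Next, I would specialize by substituting $y := x - \frac{1}{L}\nabla H(x)$, which is the natural choice because it minimizes the right-hand side of the descent lemma in $y$. A direct calculation then yields
\begin{equation*}
H\!\left(x - \tfrac{1}{L}\nabla H(x)\right) \leq H(x) - \tfrac{1}{L}\|\nabla H(x)\|^2 + \tfrac{1}{2L}\|\nabla H(x)\|^2 = H(x) - \tfrac{1}{2L}\|\nabla H(x)\|^2.
\end{equation*}

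Finally, since $H^*$ is by definition the infimum of $H$ over $\mathbb{R}^d$, the left-hand side is bounded below by $H^*$, so rearranging gives the desired inequality $H(x) - H^* \geq \frac{1}{2L}\|\nabla H(x)\|^2$. There is no real obstacle here beyond remembering the trick of plugging in the gradient-descent point; the entire argument is essentially a one-step computation once the descent lemma is in hand. For cleanliness, I would phrase the argument in terms of the shifted function $u(x) := H(x) - H^*$ so that the nonnegativity of $u$ immediately replaces the need to reference $H^*$ directly.
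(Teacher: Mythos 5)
Your proposal is correct and follows essentially the same route as the paper: both apply the descent lemma from $L$-smoothness at the point $y = x - \frac{1}{L}\nabla H(x)$ and conclude from the fact that $H$ is bounded below by $H^*$ (equivalently, that $u = H - H^*$ is nonnegative). No substantive difference.
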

 
 Collecting the above Proposition \ref{mixing KL}, Lemma \ref{energy barrier}, \ref{LSI Constant}, \ref{total variation} and \ref{H and gradient H} yield the proof of one of the main results of this paper:

\begin{proof}[Proof of Theorem \ref{theorem 1}]
    First, using triangle inequality we see that
    \begin{align}\label{eq:DTV triangle}
        D_{TV}(\rho_k^f,\pi_{\beta}^0)\leq D_{TV}(\rho_k^f,\pi_{\beta,c,1}^f)+D_{TV}(\pi_{\beta,c,1}^f,\pi_{\beta}^0).
    \end{align}
    We apply directly Lemma \ref{total variation} to bound the second term in the right hand side of \eqref{eq:DTV triangle} to yield the stated result. As for the first term of \eqref{eq:DTV triangle}, by recalling the results from Proposition \ref{mixing KL}, and together with the Pinsker's inequality
    \begin{equation}
    D_{TV}(\nu_1,\nu_2)\leq \sqrt{2D_{KL}(\nu_1\|\nu_2)},\label{TV and KL}
    \end{equation}
    these give rise to
    \begin{align}
    D_{TV}(\rho_k^f,\pi_{\beta,c,1}^f)&\leq \sqrt{2e^{-\frac{\eta k}{\beta C_{LSI}^f}} D_{KL}(\rho_0^f\|\pi_{\beta,c,1}^f)+16 \eta d L^{f^2}\cdot C_{LSI}^f/\beta}\nonumber\\
    &\leq \sqrt{2e^{-\frac{\eta k}{\beta C_{LSI}^f}} D_{KL}(\rho_0^f\|\pi_{\beta,c,1}^f)}+4\sqrt{\eta d L^{f^2}\cdot C_{LSI}^f/\beta}.\label{L'}
    \end{align}
    Finally, we proceed to bound $L^f = \sup\|\nabla ^2 G(x)\|$. When $H(x)>c$, the Hessian of $G$ can then be written as 
    \begin{equation*}
        \nabla ^2 G(x)=\frac{\nabla ^2 H}{\beta f(H-c)+1}-
        \frac{\nabla H\cdot \nabla H^T\cdot \beta f'(H-c)}{(\beta f(H-c)+1)^2},
    \end{equation*}
then we see that
\begin{align*}
    \|\nabla ^2 G(x)\|&=\max\limits_{\|y\|=1}\left<y,\nabla ^2G(x)\cdot y\right>\\
    &\leq \frac{\|\nabla ^2H(x)\|}{\beta f(H-c)+1}+\frac{\beta f'(H-c)}{(\beta f(H-c)+1)^2}\cdot 
    \max\limits_{\|y\|=1}\left<y,\nabla H\cdot\nabla H^T y\right>\\
    &\leq\frac{L}{\beta f(H-c)+1}+\frac{\beta f'(H-c)}{(\beta f(H-c)+1)^2}\cdot \|\nabla H\|^2\\
    &=\frac{1}{\beta f(H-c)+1}\left(L+\frac{\beta f'(H-c)}{\beta f(H-c)+1}\|\nabla H\|^2\right).
\end{align*}
By Lemma \ref{H and gradient H}, when $H(x)>c+\delta$, $f'(x)=1$ and $\|\nabla H\|^2\leq 2L(H-H^*)$, since $\beta>\frac{1}{\delta^2}$, we have
\begin{align*}
    \|\nabla^2 G(x)\|&\leq 
    \frac{1}{\beta (H-c)}\left(L+\frac{\beta (H-H^*)}{\beta(H-c)+1}\cdot 2L\right)\\
    &\leq \frac{1}{\beta\delta}\left(3+\frac{2(c-H^*)}{\delta}\right)\cdot L\\
    &\leq (2(c-H^*)+3\delta)\cdot L,
\end{align*}
when $c+\frac{\delta}{4}\leq H(x)\leq c+\delta$, recalling that $f'(x)\leq f'(\frac{\delta}{4})=\frac{10}{3}$, for $\beta>\frac{1}{\delta^2}$, we have 
\begin{align*}
    \|\nabla^2 G(x)\|&\leq 
    \frac{1}{\beta (H-c)}\left(L+\frac{10}{3}\frac{\beta (H-H^*)}{\beta(H-c)+1}\cdot 2L\right)\\
    &\leq \frac{4}{\beta \delta}\left(L+\frac{20}{3}L\cdot\frac{c-H^*+\delta}{\delta}\right)\\
    &\leq \left(\frac{80}{3}(c-H^*)+\frac{92}{3}\delta\right)\cdot L,
\end{align*}
and when $c<H(x)<c+\frac{\delta}{4}$, $f(x)=\frac{20}{3\delta} x^2$ and $f'(x)=\frac{40}{3\delta}x$. For simplicity in the following calculation, denote $\alpha=\frac{20}{3\delta}\beta$, $H-c=t$ and $c-H^*+\frac{\delta}{4}=mL$, for $\beta>\frac{3}{20\delta}$, we can write
\begin{align*}
    \|\nabla^2 G(x)\|&\leq
    \frac{1}{\beta\frac{20}{3\delta}(H-c)^2+1}\left(L+\frac{\beta\frac{40}{3\delta}(H-c)}{\beta\frac{20}{3\delta}(H-c)^2+1}\left(c-H^*+\frac{\delta}{4}\right)\right)\\
    &=\frac{1}{\alpha t^2+1}\left(1+\frac{2\alpha t}{\alpha t^2+1}m\right)L\\
    &=\frac{\alpha t^2+2m\alpha t+1}{(\alpha t^2+1)^2}L
    =\frac{(\alpha t+\frac{1}{t})+2m\alpha}{(\alpha t+\frac{1}{t})^2}L\leq \frac{m\alpha+\sqrt{\alpha}}{2\alpha}L
    \\
    &=\frac{c-H^*}{2}+\frac{\delta}{8}+\frac{\delta}{2}L.
\end{align*}
Therefore, with the assumption that $H$ is $L$-smooth, when $\beta>\frac{1}{\delta^2}\vee \frac{3}{20\delta}$, we thus have $L^f\leq \max\left\{(2(c-H^*)+3\delta)L, \left(\frac{80}{3}(c-H^*)+\frac{92}{3}\delta\right) L, \frac{c-H^*}{2}+\frac{\delta}{8}+\frac{\delta}{2}L\right\}$. The desired result follows.
\end{proof}

\subsection{Proof for improved convergence towards the global minimum}\label{convergence rate to global minimum}

The focus of this section is to prove an upper bound on the large deviation probability of the form 
$$\mathbb{P}\left(H(Y_k)>H^*+\epsilon\right),$$
as stated in Theorem \ref{comparison of probability}. First, we relate these large deviation probability of $\{X_k\}$ and $\{Y_k\}$ with that of $\pi_1$ and $\pi_2$:

\begin{lemma}\label{upper bound of probability}
    Recalling $\pi_1=\pi_{\beta}^0\propto e^{-\beta H}$ and $\pi_2=\pi_{\beta,c,1}^f \propto e^{-\beta G}$. For any precision $\epsilon>0$, we have
    \begin{gather*}
        \left |\mathbb{P}\left(H(X_k)>H^*+\epsilon\right)-\int_{H>H^*+\epsilon}\pi_1dx
        \right|\leq 2D_{TV}(\rho_k,\pi_1),\\
        \left |\mathbb{P}\left(H(Y_k)>H^*+\epsilon\right)-\int_{H>H^*+\epsilon}\pi_2dx
        \right|\leq 2D_{TV}(\rho_k^f,\pi_2).
    \end{gather*}
Among which 
\begin{equation*}
   \left|\int_{H>H^*+\epsilon}\pi_1dx-\int_{H>H^*+\epsilon}\pi_2dx \right|\leq 2D_{TV}(\pi_1,\pi_2).
\end{equation*}
\end{lemma}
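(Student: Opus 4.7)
The plan is to derive all three inequalities from a single elementary observation: for any two probability densities $\mu, \nu$ on $\mathbb{R}^d$ and any Borel set $A$, the paper's normalization $D_{TV}(\mu,\nu) = \tfrac{1}{2}\int|\mu-\nu|\,dx$ gives
\[
\left| \int_A (\mu - \nu)\,dx \right| \leq \int_A |\mu - \nu|\,dx \leq \int_{\mathbb{R}^d} |\mu - \nu|\,dx = 2\, D_{TV}(\mu, \nu).
\]

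The first step is to take $A = A_\epsilon := \{x \in \mathbb{R}^d : H(x) > H^* + \epsilon\}$, which is Borel since $H \in C^3$, and to rewrite everything in the lemma as an integral over $A_\epsilon$. By definition of the laws of $X_k$ and $Y_k$,
\[
\mathbb{P}\bigl(H(X_k) > H^* + \epsilon\bigr) = \int_{A_\epsilon} \rho_k \, dx,
\qquad
\mathbb{P}\bigl(H(Y_k) > H^* + \epsilon\bigr) = \int_{A_\epsilon} \rho_k^f \, dx,
\]
and likewise $\int_{H > H^* + \epsilon} \pi_i \, dx = \int_{A_\epsilon} \pi_i \, dx$ for $i=1,2$. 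With these rewrites, the three claimed bounds are obtained by applying the elementary estimate above to the pairs $(\mu,\nu) = (\rho_k, \pi_1)$, $(\rho_k^f, \pi_2)$, and $(\pi_1, \pi_2)$ respectively.

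There is essentially no obstacle here: the lemma is just the statement that, under the convention $D_{TV} = \tfrac{1}{2}\|\mu-\nu\|_{L^1}$, the probabilities assigned to a fixed event by two measures differ by at most $2\,D_{TV}$. One could in fact tighten the constant from $2$ to $1$ via the sharper identity $\sup_B |\mu(B)-\nu(B)| = D_{TV}(\mu,\nu)$, but retaining the factor $2$ is harmless for the rate bounds assembled in Theorem \ref{comparison of probability}, where the three inequalities are combined through another triangle inequality $D_{TV}(\rho_k^f, \pi_1) \leq D_{TV}(\rho_k^f, \pi_2) + D_{TV}(\pi_2, \pi_1)$ together with Pinsker and Lemma \ref{total variation}.
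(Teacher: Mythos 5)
Your proposal is correct and follows essentially the same route as the paper's proof: both write the probabilities as integrals of the respective densities over the superlevel set $\{H>H^*+\epsilon\}$ and then bound the difference by $\int |\mu-\nu|\,dx = 2D_{TV}(\mu,\nu)$. Your side remark that the constant $2$ could be sharpened to $1$ via $\sup_B|\mu(B)-\nu(B)|=D_{TV}(\mu,\nu)$ is accurate but, as you note, immaterial to the downstream rate bounds.
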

\begin{proof} First, we note that
    \begin{align*}
    \mathbb{P}(H(X_k)>H^*+\epsilon)
    &=\int_{H>H^*+\epsilon}\rho_kdx\\
    &=\int_{H>H^*+\epsilon} \rho_k dx-\int_{H>H^*+\epsilon}  \pi_1 dx+
    \int_{H>H^*+\epsilon} \pi_1 dx,
    \end{align*}
and
\begin{equation*}
    \left|\int_{H>H^*+\epsilon} \rho_k dx-\int_{H>H^*+\epsilon}  \pi_1 dx
    \right|\leq \int_{H>H^*+\epsilon}\left|\rho_k-\pi_1\right|dx
    \leq 2D_{TV}(\rho_k,\pi_1),
\end{equation*}
whence we have
\begin{equation*}
    \left |\mathbb{P}\left(H(X_k)>H^*+\epsilon\right)-\int_{H>H^*+\epsilon}\pi_1dx
        \right|\leq 2D_{TV}(\rho_k,\pi_1),
\end{equation*}
which completes the case of $X_k$.
The case of $Y_k$ is similar to that of $X_k$. In addition, we see that
\begin{equation*}
    \left|\int_{H>H^*+\epsilon} \pi_1 dx-\int_{H>H^*+\epsilon}  \pi_2 dx
    \right|\leq \int_{H>H^*+\epsilon}\left|\pi_1-\pi_2\right|dx
    \leq 2D_{TV}(\pi_1,\pi_2),
\end{equation*}
which completes the proof.
\end{proof}

Lemma \ref{upper bound of probability} gives an upper bound on the difference between the probability regarding the distribution of $k^{th}$ iteration and the associated Gibbs distribution. Moreover, by Lemma \ref{total variation}, we observe that $\int_{H>H^*+\epsilon}\pi_1dx$ and $\int_{H>H^*+\epsilon}\pi_2dx$ asymptotically converge to $0$ as $\beta\rightarrow \infty$. To obtain the quantitative convergence rate, we present the following lemma:

\begin{lemma}\label{upper bound of pi1 and pi2}
    Under Assumption \ref{assumption 1} to \ref{assumption 5} and \ref{assumption integrable}, for $\beta>\frac{1}{\delta}$ and $0<\epsilon<c$, if there exists $I_2 < \infty$ such that
    \begin{equation}
        \int_{\mathbb{R}^d}e^{-(H-H^*)}dx<I_2,\label{I2}
    \end{equation}
    then we have 
    \begin{align*}
        \int_{H>H^*+\epsilon}\pi_1 dx &\leq \left(\frac{\beta L}{2\pi}\right)^{d/2}e^{-(\beta-1)\epsilon}I_2,\\
        \int_{H>H^*+\epsilon}\pi_2 dx &\leq \left(\frac{\beta L}{2\pi}\right)^{d/2}\left(2\mu(c\leq H\leq c+\delta)e^{-\beta (c-H^*)}+2e^{-\beta (c-H^*)}I_1+e^{-(\beta-1)\epsilon}I_2\right).
    \end{align*}
\end{lemma}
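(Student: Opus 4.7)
The plan is to bound each superlevel integral by combining the uniform lower bound $Z_1 \geq e^{-\beta H^*}(2\pi/(\beta L))^{d/2}$ on the partition function, already established in \eqref{lower bound of Z1} via $L$-smoothness of $H$, with a careful estimate of the unnormalized mass over $\{H > H^* + \epsilon\}$. Since $G \leq H$ pointwise (because $h_{\beta,c,1}^f \leq 1$), one also has $Z_2 \geq Z_1$, so the same lower bound applies to the second Gibbs measure if needed.

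For the first inequality concerning $\pi_1$, I would use the splitting $e^{-\beta H} = e^{-(\beta-1)H}\cdot e^{-H}$. On the superlevel set $\{H > H^* + \epsilon\}$ the first factor is bounded by $e^{-(\beta-1)(H^*+\epsilon)}$, and this constant can be pulled out of the integral. The remaining $\int_{\mathbb{R}^d} e^{-H}\,dx$ equals $e^{-H^*} \int_{\mathbb{R}^d} e^{-(H-H^*)}\,dx$, which is at most $e^{-H^*} I_2$ by assumption \eqref{I2}. Dividing by the lower bound on $Z_1$, the three exponentials in $H^*$ cancel completely, leaving exactly $(\beta L/(2\pi))^{d/2} e^{-(\beta-1)\epsilon} I_2$.

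For the second inequality concerning $\pi_2$, the cleanest route is to reduce to the first estimate via the triangle inequality for total variation,
\[
 \int_{H>H^*+\epsilon} \pi_2 \, dx \;\leq\; \int_{H>H^*+\epsilon} \pi_1 \, dx \;+\; 2\, D_{TV}(\pi_1,\pi_2),
\]
and then invoke Lemma \ref{total variation} to bound $D_{TV}(\pi_1,\pi_2)$ by $(\beta L/(2\pi))^{d/2}[\mu(c \leq H \leq c + \delta) e^{-\beta(c-H^*)} + e^{-\beta(c-H^*)} I_1]$. Adding this to the previously derived bound on $\pi_1$ produces precisely the stated expression, with the factors of $2$ landing in front of the two $e^{-\beta(c-H^*)}$ terms and $I_2$ appearing without factor.

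There is essentially no technical obstacle. The only care needed is the bookkeeping of the cancellation of exponentials in $H^*$, and noting that the hypothesis $\beta > 1/\delta$ plays no active role in my argument beyond being the standing hypothesis under which Lemma \ref{total variation} is proved. A direct proof of the second inequality is also possible by partitioning $\{H > H^* + \epsilon\}$ into the three regions $\{H^*+\epsilon < H \leq c\}$, $\{c < H \leq c + \delta\}$, and $\{H > c + \delta\}$, on which respectively $G = H$, $G \geq c$, and $G \geq c + \beta^{-1} \ln((H-c)/(2\delta))$ by \eqref{H and G}, but the total variation shortcut is both shorter and makes the structural dependence on Lemma \ref{total variation} explicit.
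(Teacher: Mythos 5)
Your proposal is correct and follows essentially the same route as the paper: the first bound is obtained by extracting $e^{-(\beta-1)\epsilon}$ on the superlevel set, invoking \eqref{I2}, and dividing by the lower bound \eqref{lower bound of Z1} on $Z_1$ (your factorization $e^{-\beta H}=e^{-(\beta-1)H}e^{-H}$ is just a cosmetic rearrangement of the paper's $e^{-\beta(H-H^*)}=e^{-(\beta-1)(H-H^*)}e^{-(H-H^*)}$), and the second bound is derived exactly as in the paper via $\int_{H>H^*+\epsilon}\pi_2\,dx\leq\int_{H>H^*+\epsilon}\pi_1\,dx+2D_{TV}(\pi_1,\pi_2)$ combined with Lemma \ref{total variation}. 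No gaps.
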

\begin{proof}
    It can be obtained that
    \begin{align*}       \int_{H>H^*+\epsilon}\pi_1dx &=\frac{1}{Z_1}\int_{H>H^*+\epsilon}e^{-\beta H}dx\\
        &= \frac{1}{Z_1 e^{H^*}}\int_{H>H^*+\epsilon}e^{-\beta (H-H^*)}dx\\
        &\leq \frac{1}{Z_1 e^{H^*}}e^{-(\beta-1) \epsilon}\int_{H>H^*+\epsilon}e^{-(H-H^*)}dx\\
        &\leq \left(\frac{\beta L}{2\pi}\right)^{d/2}e^{-(\beta-1)\epsilon}\int_{H>H^*+\epsilon}e^{-(H-H^*)}dx\\
        &\leq \left(\frac{\beta L}{2\pi}\right)^{d/2}e^{-(\beta-1)\epsilon}I_2,
    \end{align*}
where the last inequality follows from $Z_1\geq e^{-\beta H^*} \left(\frac{2\pi}{\beta L}\right)^{d/2}$ as proved earlier in equation \eqref{lower bound of Z1}. In addition, by Lemma \ref{total variation} and \ref{upper bound of probability}, we can directly compute that 
\begin{align*}
    \int_{H>H^*+\epsilon}\pi_2dx
    &\leq \int_{H>H^*+\epsilon}\pi_1dx+2D_{TV}(\pi_1,\pi_2)\\
    &\leq \left(\frac{\beta L}{2\pi}\right)^{d/2}e^{-(\beta-1)\epsilon}I_2
    +2\left(\frac{\beta L}{2\pi}\right)^{d/2}\left(\mu(c\leq H\leq c+\delta)e^{-\beta (c-H^*)}+e^{-\beta (c-H^*)}I_1 \right)\\
    &=\left(\frac{\beta L}{2\pi}\right)^{d/2}\left(2\mu(c\leq H\leq c+\delta)e^{-\beta (c-H^*)}+2e^{-\beta (c-H^*)}I_1+e^{-(\beta-1)\epsilon}I_2\right),
\end{align*}
which is the desired result.
\end{proof}

In many general cases the condition \eqref{I2} can be verified. For instance, when $H$ follows quadratic growth, the distribution $\pi_1^0\propto e^{-H}$ satisfies a sub-Gaussian tail growth which directly echos with \eqref{I2}.

With Lemma \ref{upper bound of probability} and \ref{upper bound of pi1 and pi2}, we can compute and bound the probability of the LMC algorithms lying at the set $\{x:H(x) \geq H^* + \epsilon\}$ at the $k^{th}$ iteration. To quantitatively demonstrate the improvement brought by landscape modification, we shall compare the upper bound of these large deviation probabilities between the original and the modified iteration simultaneously. 

\begin{proof}[Proof of Theorem \ref{comparison of probability}]
    As an immediate corollary of Lemma \ref{upper bound of probability}, we have respectively
    \begin{align*}
        \mathbb{P}(H(X_k)>H^*+\epsilon)\leq &\int_{H>H^*+\epsilon}\pi_1dx+2D_{TV}(\rho_k,\pi_1),\\
        \mathbb{P}(H(Y_k)>H^*+\epsilon)\leq &\int_{H>H^*+\epsilon}\pi_2dx+2D_{TV}(\rho_k^f,\pi_2).
    \end{align*}
    Collecting Lemma \ref{upper bound of pi1 and pi2}, Proposition \ref{mixing KL}, equation \eqref{TV and KL} and \eqref{L'}, we obtain that
    \begin{align*}
        \mathbb{P}(H(X_k)>H^*+\epsilon) &\leq \int_{H>H^*+\epsilon}\pi_1dx+2D_{TV}(\rho_k,\pi_1)\\
        &\leq  2e^{-\frac{\eta k}{2\beta C_{LSI}}}\sqrt{2D_{KL}(\rho_0\|\pi_{\beta}^0)}+8\sqrt{\eta d L^2\cdot C_{LSI}/\beta}\\
        &\quad + \left(\frac{\beta L}{2\pi}\right)^{d/2}e^{-(\beta-1)\epsilon}I_2,
    \end{align*}
    and
    \begin{align*}
        \mathbb{P}(H(Y_k)>H^*+\epsilon)
        &\leq \int_{H>H^*+\epsilon}\pi_2dx+2D_{TV}(\rho_k^f,\pi_2)\\
        &\leq 2e^{-\frac{\eta k}{2\beta C_{LSI}^f}}\sqrt{2D_{KL}(\rho_0^f\|\pi_{\beta,c,1}^f)}+8\sqrt{\eta d L^{f^2}\cdot C_{LSI}^f/\beta}\\
        &\quad + \left(\frac{\beta L}{2\pi}\right)^{d/2}\left(2\mu(c\leq H\leq c+\delta)e^{-\beta (c-H^*)}+2e^{-\beta (c-H^*)}I_1+e^{-(\beta-1)\epsilon}I_2\right).
    \end{align*}
\end{proof}

\subsection{Proof for reduced excess risk to the global minimum}\label{excess risk to global minimum}
In this section, we will first investigate the excess risk of the modified iteration $Y = \{Y_k\}$ under both the modified objective function $H_{\beta,c,1}^f$ and the original objective function $H$. For the excess risk under the modified objective function, that is, $\mathbb{E}_{Y_k\sim \rho_k^f}[H_{\beta,c,1}^f(Y_k)]-H^*$, it can be decomposed into two parts with the intuition borrowed from \parencite{raginsky2017non}: discretization error and approximation error from sampling, i.e.

\begin{equation}
\mathbb{E}_{Y_k\sim \rho_k^f}[H_{\beta,c,1}^f(Y_k)]-H^*=\underbrace{\mathbb{E}_{Y_k\sim\rho_k^f}[H_{\beta,c,1}^f(Y_k)]-\mathbb{E}_{Y\sim\pi_2}[H_{\beta,c,1}^f(Y)]}_{\text{discretization
error}}+\underbrace{\mathbb{E}_{Y\sim\pi_2}[H_{\beta,c,1}^f(Y)]-H^*}_{\text{sampling error}}.\label{excess risk under G}
\end{equation}
The excess risk of modified iteration under the original objective function, that is, $\mathbb{E}_{Y_k\sim \rho_k^f}[H(Y_k)]-H^*$, allows for the comparison of excess risks under the same criteria between original LMC $X = \{X_k\}$ and the modified LMC $Y = \{Y_k\}$. Specifically, we will study
\begin{equation}
    \mathbb{E}_{Y_k\sim \rho_k^f}[H(Y_k)]-H^*=\underbrace{\mathbb{E}_{Y_k\sim\rho_k^f}[H(Y_k)]-\mathbb{E}_{X\sim\pi_1}[H(X)]}_{\text{modified discretization error}}+\underbrace{\mathbb{E}_{X\sim\pi_1}[H(X)]-H^*}_{\text{sampling error}}.\label{excess risk under H}
\end{equation}
Regarding the discretization error in equation \eqref{excess risk under G}, we can bound it with the following lemma borrowed from Theorem C.1 in \parencite{kinoshita2022improved}. 

\begin{lemma}\label{expectation into W2}
    Suppose $H$ is $L$-$smooth$, then for arbitrary distribution $\nu_1$ and $\nu_2$, random variable $X\sim \nu_1$ and $Y\sim \nu_2$, we have
\begin{equation*}
    \mathbb{E}_{X\sim\nu_1}\left[H\left(X\right)\right]-\mathbb{E}_{Y\sim\nu_2}[H(Y)] \leq L W_2^2\left(\nu_1, \nu_2\right)+\mathbb{E}_{Y\sim\nu_2}[H(Y)]-H^*.
\end{equation*}
\end{lemma}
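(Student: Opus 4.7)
The plan is to exploit $L$-smoothness together with the gradient--function relation already established in Lemma \ref{H and gradient H}, coupling the two measures optimally in the Wasserstein sense. Concretely, I would start from the standard descent lemma for $L$-smooth functions,
\begin{equation*}
H(x) \leq H(y) + \langle \nabla H(y), x - y\rangle + \frac{L}{2}\|x-y\|^2,
\end{equation*}
and let $(X,Y)$ be an \emph{optimal} coupling of $\nu_1$ and $\nu_2$ realizing $W_2^2(\nu_1,\nu_2) = \mathbb{E}\|X-Y\|^2$. Taking expectations over this coupling immediately yields
\begin{equation*}
\mathbb{E}_{X\sim\nu_1}[H(X)] - \mathbb{E}_{Y\sim\nu_2}[H(Y)] \leq \mathbb{E}\langle \nabla H(Y), X - Y\rangle + \frac{L}{2} W_2^2(\nu_1,\nu_2).
\end{equation*}

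The next step, which is the crux of the proof, is to turn the cross term $\mathbb{E}\langle \nabla H(Y), X-Y\rangle$ into the two quantities on the right-hand side of the target inequality. I would apply Cauchy--Schwarz (first pointwise, then on the expectation) to obtain
\begin{equation*}
\mathbb{E}\langle \nabla H(Y), X-Y\rangle \leq \sqrt{\mathbb{E}\|\nabla H(Y)\|^2}\cdot \sqrt{\mathbb{E}\|X-Y\|^2} = \sqrt{\mathbb{E}\|\nabla H(Y)\|^2}\cdot W_2(\nu_1,\nu_2),
\end{equation*}
and then use Lemma \ref{H and gradient H} (the $L$-smoothness consequence $\|\nabla H(y)\|^2 \leq 2L(H(y) - H^*)$) in expectation to replace $\mathbb{E}\|\nabla H(Y)\|^2$ by $2L(\mathbb{E}_{Y\sim\nu_2}[H(Y)] - H^*)$.

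The final ingredient is a weighted AM--GM (equivalently Young's inequality) with the right constants: writing $\sqrt{AB}\leq \tfrac{1}{2\alpha}A + \tfrac{\alpha}{2}B$ and choosing $\alpha = L$ so that the $W_2^2$ coefficient it produces is exactly $\tfrac{L}{2}$, which combines cleanly with the $\tfrac{L}{2}W_2^2$ term from the descent lemma to give the total coefficient $L$. Explicitly,
\begin{equation*}
\sqrt{2L(\mathbb{E}_{Y\sim\nu_2}[H(Y)] - H^*)}\cdot W_2(\nu_1,\nu_2) \leq (\mathbb{E}_{Y\sim\nu_2}[H(Y)] - H^*) + \frac{L}{2}W_2^2(\nu_1,\nu_2),
\end{equation*}
and adding this to the earlier bound gives precisely the claim. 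The main obstacle, and the only place where the proof is not fully routine, is matching these constants: one must pick the Young's inequality weight so that the produced $W_2^2$ piece plus the descent-lemma $W_2^2$ piece sum to the advertised $L W_2^2$, and simultaneously the produced $\mathbb{E}[H(Y)] - H^*$ piece has coefficient exactly $1$.
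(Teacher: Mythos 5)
Your proposal is correct and uses essentially the same ingredients as the paper's proof: an optimal coupling, the descent lemma, the bound $\|\nabla H\|^2\leq 2L(H-H^*)$ from Lemma \ref{H and gradient H}, and a Young-type inequality with weight $L$ producing the coefficients $\tfrac{L}{2}+\tfrac{L}{2}=L$ on $W_2^2$ and $1$ on $\mathbb{E}_{Y\sim\nu_2}[H(Y)]-H^*$. The only cosmetic difference is that the paper applies Young's inequality pointwise to $\|\nabla H(Y)\|\,\|X-Y\|$ before taking expectations, whereas you apply Cauchy--Schwarz at the level of expectations and then Young's inequality on the resulting scalars; both give the identical bound.
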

\begin{proof}
    Let $X\sim \nu_1$ and $Y\sim \nu_2$ be an optimal coupling such that $W_2^2(\nu_1,\nu_2)=\mathbb{E}[\|X-Y\|^2]$, and $\lambda\in \Pi(\nu_1,\nu_2)$ is the optimal distribution. Here $\Pi(\nu_1,\nu_2)$ is the set of probability measures $\xi$ on $\mathcal{B}\left(\mathbb{R}^d\right) \otimes \mathcal{B}\left(\mathbb{R}^d\right)$ which satisfies $\xi\left(A,\mathbb{R}^d\right)=\nu_1(A)$ and $\xi\left(\mathbb{R}^d,B\right)=\nu_2(B)$ for $\forall A,B\in \mathcal{B}\left(\mathbb{R}^d\right)$, where we denote $\mathcal{B}\left(\mathbb{R}^d\right)$ to be the Borel sigma algebra on $\mathbb{R}^d$.
    By Taylor expansion at $Y$, we have
\begin{align*}
    H(X)-H(Y)&\leq \|\nabla H(Y)\|\|X-Y\|+\frac{L}{2}\|X-Y\|^2\\
    &\leq \frac{L}{2}\|X-Y\|^2+\frac{1}{2L}\|\nabla H(Y)\|^2+\frac{L}{2}\|X-Y\|^2\\
    &= L\|X-Y\|^2+\frac{1}{2L}\|\nabla H(Y)\|^2\\
    &\leq L\|X-Y\|^2+H(Y)-H^*,
\end{align*}
where the last inequality utilizes Lemma \ref{H and gradient H}.
Taking expectations with respect to $\lambda$ at both sides, we have
    \begin{equation*}
        \mathbb{E}_{X\sim \nu_1}[H(X)]-\mathbb{E}_{Y\sim \nu_2}[H(Y)]\leq LW_2^2(\nu_1,\nu_2)+\mathbb{E}_{Y\sim \nu_2}[H(Y)]-H^*.
    \end{equation*}
\end{proof}

Note that when $\beta>\beta_0=\max\left\{\frac{1}{\delta^2}, \frac{3}{20\delta},\frac{1}{\delta}\right\}$, we recall from Theorem \ref{theorem 1} that the modified function $H_{\beta,c,1}^f$ is $L^f$-smooth, then we have a direct corollary of Lemma \ref{expectation into W2}.

\begin{corollary}\label{decomposition of H'}
    Under Assumption \ref{assumption 1} to \ref{assumption 5} and \ref{assumption integrable}, for $\beta>\beta_0=\max\left\{\frac{1}{\delta^2}, \frac{3}{20\delta},\frac{1}{\delta}\right\}$, we have 
\begin{equation*}
    \mathbb{E}_{Y_k\sim \rho_k^f}[H_{\beta,c,1}^f(Y_k)]-H^*\leq
L^fW_2^2\left(\rho_k^f,\pi_{\beta,c,1}^f\right)+2\left(\mathbb{E}_{Y\sim \pi_2}[H_{\beta,c,1}^f(Y)]-H^*\right),
\end{equation*}
where $L^f\leq \max\left\{(2(c-H^*)+3\delta)L, \left(\frac{80}{3}(c-H^*)+\frac{92}{3}\delta\right) L, \frac{c-H^*}{2}+\frac{\delta}{8}+\frac{\delta}{2}L\right\}$.
\end{corollary}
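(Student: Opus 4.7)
The plan is to apply Lemma \ref{expectation into W2} with the objective $H$ there replaced by the modified objective $G = H_{\beta,c,1}^f$, and with $\nu_1 = \rho_k^f$, $\nu_2 = \pi_{\beta,c,1}^f$. This requires two ingredients: (i) $G$ is $L^f$-smooth with the stated bound on $L^f$, and (ii) the global minimum of $G$ coincides with $H^*$, so the ``$-H^*$'' on the left of the corollary really is ``$-\min G$'' as needed when invoking the lemma.

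For (i), I would simply quote the argument already carried out inside the proof of Theorem \ref{theorem 1}: under $\beta > \beta_0 = \max\{1/\delta^2,\, 3/(20\delta),\, 1/\delta\}$, an explicit case analysis on $H(x) > c+\delta$, $c + \delta/4 \le H(x) \le c+\delta$, $c < H(x) < c + \delta/4$, and $H(x) \le c$ (where $G \equiv H$) gives $L^f \le \max\{(2(c-H^*)+3\delta)L,\,(\tfrac{80}{3}(c-H^*)+\tfrac{92}{3}\delta)L,\,\tfrac{c-H^*}{2}+\tfrac{\delta}{8}+\tfrac{\delta}{2}L\}$. No new computation is needed. For (ii), since $h_{\beta,c,1}^f > 0$ and $H(x) \ge H^*$ for all $x$, the defining integral representation \eqref{landscape modification function} yields $G(x) = H^* + \int_{H^*}^{H(x)} h_{\beta,c,1}^f(u)\,du \ge H^*$, with equality at $x = m_1$ (where $H(m_1) = H^*$). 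Hence $\min_x G(x) = H^*$.

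With (i) and (ii) in hand, Lemma \ref{expectation into W2} applied to $G$ (noting that its proof only invokes $L^f$-smoothness together with Lemma \ref{H and gradient H}, which applies verbatim to $G$ since $G - H^* \ge 0$ is itself $L^f$-smooth) gives
\begin{equation*}
\mathbb{E}_{Y_k \sim \rho_k^f}[G(Y_k)] - \mathbb{E}_{Y \sim \pi_2}[G(Y)] \;\le\; L^f\, W_2^2(\rho_k^f, \pi_{\beta,c,1}^f) + \mathbb{E}_{Y \sim \pi_2}[G(Y)] - H^*.
\end{equation*}
Adding $\mathbb{E}_{Y \sim \pi_2}[G(Y)] - H^*$ to both sides yields exactly the stated bound
\begin{equation*}
\mathbb{E}_{Y_k \sim \rho_k^f}[H_{\beta,c,1}^f(Y_k)] - H^* \;\le\; L^f\, W_2^2\!\left(\rho_k^f, \pi_{\beta,c,1}^f\right) + 2\bigl(\mathbb{E}_{Y \sim \pi_2}[H_{\beta,c,1}^f(Y)] - H^*\bigr).
\end{equation*}

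There is really no serious obstacle here; the statement is essentially bookkeeping on top of Lemma \ref{expectation into W2}. The only subtle point worth flagging explicitly in the write-up is the identification $\min G = H^* = \min H$, which is what lets us keep a single ``excess-risk-to-$H^*$'' quantity on both sides and makes the corollary directly usable in the proof of Theorem \ref{result of expectation under H}, where it will be combined with Proposition \ref{mixing KL} (to bound $W_2^2$ via Talagrand's inequality, Assumption \ref{assumption 6}) and with a Laplace-type estimate of the sampling error $\mathbb{E}_{\pi_2}[G] - H^*$.
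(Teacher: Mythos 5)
Your proposal is correct and follows essentially the same route as the paper: decompose the excess risk into discretization plus sampling error and apply Lemma \ref{expectation into W2} with $G=H_{\beta,c,1}^f$, $\nu_1=\rho_k^f$, $\nu_2=\pi_{\beta,c,1}^f$, using the $L^f$-smoothness already established in the proof of Theorem \ref{theorem 1}. Your explicit check that $\min_x G(x)=H^*$ (so that the lemma's ``$-H^*$'' term is legitimate when applied to $G$) is a detail the paper leaves implicit, but it is correct and worth flagging.
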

\begin{proof} With equation \eqref{excess risk under G}, we can write
\begin{align*}
\mathbb{E}_{Y_k\sim\rho_k^f}[H_{\beta,c,1}^f(Y_k)]-H^* &= \mathbb{E}_{Y_k\sim \rho_k^f}[H_{\beta,c,1}^f(Y_k)]-\mathbb{E}_{Y\sim\pi_2}[H_{\beta,c,1}^f(Y)]+\mathbb{E}_{Y\sim\pi_2}[H_{\beta,c,1}^f(Y)]-H^*\\
&\leq  L^fW_2^2\left(\rho_k^f,\pi_{\beta,c,1}^f\right)+2\left(\mathbb{E}_{Y\sim \pi_2}[H_{\beta,c,1}^f(Y)]-H^*\right),
\end{align*}
where the inequality can be obtained by plugging $\nu_1=\rho_k^f$ and $\nu_2=\pi_2=\pi_{\beta,c,1}^f$ into Lemma \ref{expectation into W2}.
\end{proof}

Regarding the expectation of $H_{\beta,c,1}^f$ with respect to $\pi_{\beta,c,1}^f$, we have the following lemma.

\begin{lemma}\label{modified expectation}
   Under Assumption \ref{assumption 1} to \ref{assumption 5} and \ref{assumption integrable}, for given $0<r<\frac{3\gamma}{2L_1}$ and there is no local minimum other than $x^*$ in $B_r(x^*)$, suppose $\max\limits_{B_r(x^*)}H(x)-H^*=\delta_r\leq c-H^*$, and $\min\limits_{B_r' (x^*)}H(x)-H^*=\Delta_r>0$. Then for step size $0<\eta\leq \frac{\beta C_{LSI}^f}{16L^{f^2}}$,
   $\beta>\beta_0=\max\left\{\frac{1}{\delta},\frac{1}{\delta^2}, \frac{3}{20\delta}\right\}$,
    if there exists $I_3=I_3(c,\delta)<\infty$ such that
    \begin{equation*}
    \int_{H>c+\delta}\left(c-H^*+\delta+\delta \ln \frac{H-c}{\delta}\right)\cdot \frac{2\delta}{H-c}dx<I_3,
    \end{equation*}
    then we have
    \begin{align*}
    \mathbb{E}_{Y\sim \pi_2}[H_{\beta,c,1}^f(Y)]-H^* &\leq \frac{3}{2}\frac{d\omega_d \Gamma(\frac{d}{2}+1)}{\sqrt{\det K}}\left(\frac{2L^f}{\pi}\right)^{d/2}\cdot\frac{1}{\beta}\\
    &\quad +
    \left(\frac{\beta L^f}{2\pi}\right)^{d/2}
    \bigg((c-H^*+\delta)\mu(H\leq c+\delta)e^{-\beta \Delta_r}+e^{-\beta (c-H^*)}\cdot I_3\bigg),
    \end{align*}
    where
    $L^f\leq \max\left\{(2(c-H^*)+3\delta)L, \left(\frac{80}{3}(c-H^*)+\frac{92}{3}\delta\right) L, \frac{c-H^*}{2}+\frac{\delta}{8}+\frac{\delta}{2}L\right\}$.
\end{lemma}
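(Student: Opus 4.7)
The plan is to expand $\mathbb{E}_{Y\sim \pi_2}[G(Y)] - H^* = Z_2^{-1}\int_{\mathbb{R}^d}(G(x) - H^*)\,e^{-\beta G(x)}\,dx$, then upper bound the numerator by decomposing the integration domain into three pieces chosen to fit the structure of $G$ relative to the threshold $c$: the ``well'' $B_r(x^*)$ around the unique global minimum, the intermediate shell $\overline{B_r(x^*)}\cap\{H\leq c+\delta\}$, and the tail $\{H>c+\delta\}$. The partition function $Z_2$ will be lower bounded via the $L^f$-smoothness of $G$ proved in Theorem \ref{theorem 1}. Combining these estimates yields the three displayed terms in the statement, corresponding respectively to the Laplace asymptotic contribution, the $e^{-\beta\Delta_r}$ contribution from the saddle gap, and the $I_3$-controlled tail.

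For $B_r(x^*)$, since $\max_{B_r(x^*)}H-H^*=\delta_r\leq c-H^*$, we have $G=H$ on the entire ball, so the integral reduces to a Laplace-type quantity for $H$. Taylor expanding $H$ at $x^*$ with Hessian $K=\nabla^2H(x^*)$ and controlling the cubic remainder through the $L_1$-Lipschitz Hessian assumption (where the condition $r<3\gamma/(2L_1)$ ensures $\nabla^2H\succeq (\gamma/2)I$ uniformly on $B_r(x^*)$), the Gaussian integral $\int\tfrac12(x{-}x^*)^TK(x{-}x^*)\,e^{-\beta(x{-}x^*)^TK(x{-}x^*)/2}\,dx$ evaluates to $(d/(2\beta))\cdot(2\pi/\beta)^{d/2}/\sqrt{\det K}$, up to a multiplicative error that the factor $3/2$ in the stated bound absorbs. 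For the shell $\overline{B_r(x^*)}\cap\{H\leq c+\delta\}$, I will use $G-H^*\leq c-H^*+\delta$ pointwise together with $e^{-\beta G}=e^{-\beta H}\leq e^{-\beta(H^*+\Delta_r)}$ on $\overline{B_r(x^*)}\cap\{H\leq c\}$ (valid because $G=H$ there and $\delta_r\leq c-H^*$ forces the infimum of $G$ on the shell to equal $H^*+\Delta_r$), yielding the $(c-H^*+\delta)\mu(H\leq c+\delta)e^{-\beta\Delta_r}$ contribution. For the tail $\{H>c+\delta\}$, both inequalities in \eqref{H and G} come into play: $G-H^*\leq c-H^*+\delta+\beta^{-1}\ln((H-c)/\delta)$ and $e^{-\beta G}\leq e^{-\beta c}\cdot 2\delta/(H-c)$, producing an integrand bounded by exactly the function defining $I_3$ times $e^{-\beta c}$.

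For the denominator, the $L^f$-smoothness of $G$ with $G(x^*)=H^*$ gives $G(x)\leq H^*+(L^f/2)\|x-x^*\|^2$, hence $Z_2\geq e^{-\beta H^*}(2\pi/(\beta L^f))^{d/2}$. Dividing the three numerator pieces by this lower bound cancels the $e^{-\beta H^*}$ factor and converts the Gaussian integral into the quoted form involving $d\omega_d\Gamma(d/2+1)/\sqrt{\det K}$ and $(2L^f/\pi)^{d/2}/\beta$, while the other two pieces acquire the displayed $(\beta L^f/(2\pi))^{d/2}$ prefactor. The main obstacle will be executing the Laplace asymptotics on $B_r(x^*)$ with enough precision to produce a clean multiplicative constant: both the quadratic lower bound on $H-H^*$ (used in the Boltzmann factor) and the quadratic upper bound (used in the weight $G-H^*$) need non-asymptotic control, and one must verify that the low temperature assumption $\beta>\beta_0$ makes the Gaussian mass concentrate inside $B_r(x^*)$ so that extending the Gaussian integral to all of $\mathbb{R}^d$ introduces only an exponentially small error that is absorbed by the $3/2$ slack. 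The condition $r<3\gamma/(2L_1)$ provides exactly the Hessian margin needed to carry out this step.
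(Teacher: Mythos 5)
Your proposal follows essentially the same route as the paper's proof: the same decomposition of the domain into the ball $B_r(x^*)$, the shell up to $\{H\le c+\delta\}$, and the tail $\{H>c+\delta\}$; the same lower bound $Z_2\ge e^{-\beta H^*}\left(2\pi/(\beta L^f)\right)^{d/2}$; the same Taylor--Laplace estimate on the ball using the $L_1$-Lipschitz Hessian and $r<\frac{3\gamma}{2L_1}$; and the same use of both inequalities in \eqref{H and G} together with $\beta>1/\delta$ to produce the $e^{-\beta c}I_3$ term. One small simplification you can take: since the integrand on $B_r(x^*)$ is nonnegative, extending the Gaussian integral to all of $\mathbb{R}^d$ is automatically an upper bound, so the concentration verification you flag as the main obstacle is not actually needed.
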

\begin{proof} Utilizing the idea in \parencite{lapinski2019multivariate}, we write
\begin{align}
    \mathbb{E}_{Y\sim \pi_2}[H_{\beta,c,1}^f(Y)]-H^*
    &=\frac{1}{Z_2}\int_{\mathbb{R}^d} Ge^{-\beta G}dx-H^* \nonumber\\
    &=\frac{1}{Z_2}\int_{\mathbb{R}^d}(G-H^*)e^{-\beta G}dx \nonumber\\
    &=\frac{1}{Z_2}\left(\int_{B_r(x^*)}(G-H^*)e^{-\beta G}dx+\int_{B_r'(x^*)}(G-H^*)e^{-\beta G}dx\right), \label{eq:Lem21RHS}
\end{align}
where 
\begin{equation*}
    Z_2=\int_{\mathbb{R}^d} e^{-\beta G}dx\geq e^{-\beta H^*}\int_{\mathbb{R}^d} \exp\left(-\frac{\beta L^f}{2} \|x-x^*\|^2\right)dx=e^{-\beta H^*}\left(\frac{2\pi}{\beta L^f}\right)^{d/2},
\end{equation*}
since $G(x)\leq H^*+\frac{L^f}{2}\|x-x^*\|^2$.

Note that when $\delta_r\leq c$, $B_r(x^*)\subset \{x:H(x)\leq c\}$, and therefore when $x\in B_r(x^*)$ the bounds for the third derivative of $H$ and $G$ remain the same, which means that $L_1$ is a shared parameter in $B_r(x^*)$. As a result, we have
\begin{equation*}
    \left|G-H^*-\frac{1}{2}\|x-x^*\|_K^2\right|\leq \frac{L_1}{6}\|x-x^*\|^3, \quad \forall x\in B_r(x^*).
\end{equation*}
We proceed to handle separately the two terms on the right hand side of \eqref{eq:Lem21RHS}. For the first term, we see that
\begin{align*}
    \int_{B_r(x^*)} &(G-H^*)e^{-\beta G}dx \\
    &\leq 
    e^{-\beta H^*}\int_{B_r(x^*)}\left(\frac{1}{2}\|x-x^*\|_K^2+\frac{L_1}{6}\|x-x^*\|^3\right) \exp \left(-\beta \left(\frac{1}{2}\|x-x^*\|_K^2-\frac{L_1}{6}\|x-x^*\|^3\right)\right)dx\\
    &\leq e^{-\beta H^*}\int_{B_r(x^*)}\left(\frac{3}{4}\|x-x^*\|_K^2\right)
    \exp \left(-\frac{\beta}{4}\|x-x^*\|_K^2\right)dx\\
    &\leq e^{-\beta H^*}\frac{3}{4}\int_{\mathbb{R}^d} \|x-x^*\|_K^2
    \exp \left(-\frac{\beta}{4}\|x-x^*\|_K^2\right)dx\\
    &= e^{-\beta H^*}\frac{3}{4}\frac{d\omega_d}{\sqrt{\det K}}\int_{0}^{\infty}r^{d+1}e^{-\frac{\beta}{4}r^2}dr
    =e^{-\beta H^*}\frac{3}{8}\frac{d\omega_d \Gamma(\frac{d}{2}+1)}{\sqrt{\det K}}\left(\frac{4}{\beta}\right)^{d/2+1},
\end{align*}
where $\omega_d$ is the volume of a $d$-dimensional unit ball in $\mathbb{R}^d$. The second inequality comes from $r<\frac{3\gamma}{2L_1}$.
This leads to 
$$
\frac{1}{Z_2}\int_{B_r(x^*)} (G - H^*)e^{-\beta G}dx\leq 
\frac{3}{2}\frac{d\omega_d \Gamma(\frac{d}{2}+1)}{\sqrt{\det K}}\left(\frac{2L^f}{\pi}\right)^{d/2}\cdot\frac{1}{\beta}=\mathcal{O}(\beta^{-1}).
$$
For the second term on the right hand side of \eqref{eq:Lem21RHS}, the integral on $B_r'(x^*)$ can be split into three parts as follows:
\begin{align*}
    \int_{B_r'(x^*)} &(G-H^*)e^{-\beta G}dx \\
    &= \int_{\{H<c\}\cap B_r'(x^*)} (G-H^*)e^{-\beta G}dx+\int_{c\leq H\leq c+\delta} (G-H^*)e^{-\beta G}dx+\int_{H>c+\delta}(G-H^*)e^{-\beta G}dx\\
    &\leq (c-H^*)\mu(H<c)e^{-\beta (\Delta_r+H^*)}+(c-H^*+\delta)\mu(c\leq H\leq c+\delta)e^{-\beta c}+\int_{H>c+\delta} (G-H^*)e^{-\beta G}dx\\
    &\leq (c-H^*+\delta)\mu(H\leq c+\delta)e^{-\beta (\Delta_r+H^*)}+\int_{H>c+\delta} (G-H^*)e^{-\beta G}dx.
\end{align*}

Note that when $H(x)>c+\delta$, 
since $\beta>\beta_0\geq \frac{1}{\delta}$, by equation \eqref{H and G} we obtain that
\begin{align*}
    c+\frac{1}{\beta}\ln \frac{H-c}{2\delta}\leq G(x)&\leq 
    c+\delta+\frac{1}{\beta}\ln \frac{H-c}{\delta}\\
    &\leq c+\delta+\delta \ln \frac{H-c}{\delta}.
\end{align*}
Plugging these into the integral above, we arrive at 
\begin{align*}
    \int_{H>c+\delta} (G-H^*)e^{-\beta G}dx &\leq \int_{H>c+\delta}\left(c-H^*+\delta+\delta\ln\frac{H-c}{\delta}\right)e^{-\left(\beta c+\ln\frac{H-c}{2\delta}\right)}dx\\
    &=e^{-\beta c}\int_{H>c+\delta}\left(c-H^*+\delta+\delta \ln \frac{H-c}{\delta}\right)\cdot \frac{2\delta}{H-c}dx\\
    &<e^{-\beta c}\cdot I_3,
\end{align*}
and therefore
\begin{equation*}
    \frac{1}{Z_2}\int_{B_r'(x^*)} (G-H^*)e^{-\beta G}dx\leq 
    \left(\frac{\beta L^f}{\pi}\right)^{d/2}
    \left((c-H^*+\delta)\mu(H\leq c+\delta)e^{-\beta \Delta_r}+e^{-\beta (c-H^*)}\cdot I_3\right).
\end{equation*}
Combining with the former part, the desired result follows.
\end{proof}

Note that the Wasserstein distance between the $k^{th}$ iteration of the modified LMC $\rho_k^f$ and its associated modified Gibbs distribution $\pi_{\beta,c,1}^f$ can be estimated through the KL divergence via the Talagrand inequality in Assumption \ref{assumption 6}, then with Proposition \ref{mixing KL} we could obtain the excess risk under $H_{\beta,c,1}^f$.

\begin{theorem}
    Under Assumption \ref{assumption 1} to \ref{assumption 5}, \ref{assumption 6} and \ref{assumption integrable}, for given $0<r<\frac{3\gamma}{2L_1}$ and $x^*$ is the unique local minimum of $H$ in $B_r(x^*)$, suppose $\max\limits_{B_r(x^*)}H(x)-H^*=\delta_r\leq c-H^*$. Assume that $\min\limits_{B_r' (x^*)}H(x)-H^*=\Delta_r>0$, then for step size $0<\eta\leq \frac{\beta C_{LSI}^f}{16L^{f^2}}$, $\beta>\beta_0=\max \left\{\frac{1}{\delta},\frac{1}{\delta^2}, \frac{3}{20\delta}\right\}$, if there exists $I_3=I_3(c,\delta) < \infty$ such that
    \begin{equation*}
    \int_{H>c+\delta}\left(c-H^*+\delta+\delta \ln \frac{H-c}{\delta}\right)\cdot \frac{2\delta}{H-c}dx<I_3,
    \end{equation*}
    then we have
\begin{align*}
    \mathbb{E}_{Y_k\sim\rho_k^f}[H_{\beta,c,1}^f(Y_k)]-H^* &\leq 2L^f\cdot C_{LSI}^f\left(e^{-\frac{\eta k}{\beta C_{LSI}^f}} D_{KL}(\rho_0^f\|\pi_{\beta,c,1}^f)+8\eta d L^{f^2}\cdot C_{LSI}^f/\beta\right)\\
    &\quad +3\frac{d\omega_d \Gamma(\frac{d}{2}+1)}{\sqrt{\det K}}\left(\frac{2L^f}{\pi}\right)^{d/2}\cdot\frac{1}{\beta}\\
    &\quad + 2\left(\frac{\beta L^f}{2\pi}\right)^{d/2}
    \left((c-H^*+\delta)\mu(H\leq c+\delta)e^{-\beta \Delta_r}+e^{-\beta (c-H^*)}\cdot I_3\right),
\end{align*}
where $C_{LSI}^f=\mathcal{O}\left(\beta M+\frac{\beta M^2}{H(m_2)-c} \right)$, and $$L^f \leq \max\left\{(2(c-H^*)+3\delta)L, \left(\frac{80}{3}(c-H^*)+\frac{92}{3}\delta\right) L, \frac{c-H^*}{2}+\frac{\delta}{8}+\frac{\delta}{2}L\right\}.$$
\end{theorem}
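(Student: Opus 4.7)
The plan is to decompose the excess risk under the modified objective into a Wasserstein discretization piece plus a sampling piece, exactly as prepared in equation \eqref{excess risk under G} and formalized by Corollary \ref{decomposition of H'}. That decomposition gives
\begin{equation*}
\mathbb{E}_{Y_k\sim\rho_k^f}[H_{\beta,c,1}^f(Y_k)] - H^* \;\le\; L^f\, W_2^2(\rho_k^f,\pi_{\beta,c,1}^f) \;+\; 2\bigl(\mathbb{E}_{Y\sim\pi_2}[H_{\beta,c,1}^f(Y)] - H^*\bigr),
\end{equation*}
so the task reduces to estimating (i) the transport distance $W_2^2(\rho_k^f,\pi_{\beta,c,1}^f)$ and (ii) the equilibrium expectation $\mathbb{E}_{\pi_2}[H_{\beta,c,1}^f(Y)] - H^*$.

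For (i), I would first convert from $W_2$ to KL using the Talagrand inequality (Assumption \ref{assumption 6}) applied to $\pi_{\beta,c,1}^f$, and then combine with the Otto--Villani implication LSI $\Rightarrow T_2$ to conclude that the Talagrand constant satisfies $T_a^f \ge 1/C_{LSI}^f$, giving
\begin{equation*}
W_2^2(\rho_k^f,\pi_{\beta,c,1}^f) \;\le\; \frac{2}{T_a^f}\, D_{KL}(\rho_k^f \,\|\, \pi_{\beta,c,1}^f) \;\le\; 2\,C_{LSI}^f\, D_{KL}(\rho_k^f \,\|\, \pi_{\beta,c,1}^f).
\end{equation*}
Since Lemma \ref{LSI Constant} yields $C_{LSI}^f < \infty$ and Theorem \ref{theorem 1} shows $G$ is $L^f$-smooth, Proposition \ref{mixing KL} applied to the modified Langevin dynamics with the stated step size $\eta \le \beta C_{LSI}^f / (16 L^{f2})$ produces the geometric decay
\begin{equation*}
D_{KL}(\rho_k^f \,\|\, \pi_{\beta,c,1}^f) \;\le\; e^{-\eta k / (\beta C_{LSI}^f)} D_{KL}(\rho_0^f \,\|\, \pi_{\beta,c,1}^f) + 8 \eta d L^{f2}\, C_{LSI}^f/\beta.
\end{equation*}
Multiplying by $L^f$ gives the first grouped term in the theorem statement, with the prefactor $2 L^f C_{LSI}^f$ coming from the factor of $2$ in the Talagrand-to-KL conversion.

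For (ii), I would invoke Lemma \ref{modified expectation} directly: its hypotheses match those of the current theorem (same choice of $r$, same $\delta_r,\Delta_r$, same $I_3$, same temperature lower bound $\beta > \beta_0$), yielding a bound of the form $\frac{3}{2} \cdot \tfrac{d\omega_d \Gamma(d/2+1)}{\sqrt{\det K}}(2L^f/\pi)^{d/2}/\beta$ plus $(\beta L^f/2\pi)^{d/2}\bigl((c-H^*+\delta)\mu(H\le c+\delta)e^{-\beta\Delta_r} + e^{-\beta(c-H^*)} I_3\bigr)$. Multiplying these two pieces by the factor $2$ from Corollary \ref{decomposition of H'} produces exactly the second and third grouped terms in the statement.

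The main technical obstacle I anticipate is the Otto--Villani step that relates Talagrand to LSI for the modified measure $\pi_{\beta,c,1}^f$. One must be sure that the Log-Sobolev constant obtained in Lemma \ref{LSI Constant} (derived via the metastability machinery of Proposition \ref{PI and LSI}) can be fed into Otto--Villani without losing additional factors that would spoil the announced polynomial dependence in $\beta$ and $M$; fortunately the Otto--Villani implication preserves the constant up to the factor $2$ already accounted for above. Once this is checked, the remaining work is purely bookkeeping: verifying that $\beta > \beta_0 = \max\{1/\delta, 1/\delta^2, 3/(20\delta)\}$ simultaneously validates the $L^f$-smoothness bound of Theorem \ref{theorem 1}, the sampling-error estimate of Lemma \ref{modified expectation}, and the step-size condition in Proposition \ref{mixing KL}, and then assembling the three contributions into the final inequality.
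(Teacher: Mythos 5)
Your proposal is correct and follows essentially the same route as the paper's own proof: the decomposition via Corollary \ref{decomposition of H'}, the Talagrand conversion of $W_2^2$ to KL combined with the Otto--Villani substitution $T_a^f = 1/C_{LSI}^f$, the KL decay from Proposition \ref{mixing KL}, and the equilibrium bound from Lemma \ref{modified expectation}. The bookkeeping of the factor $2$ in each of the three grouped terms is also exactly as in the paper.
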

\begin{proof}
    First, by Corollary \ref{decomposition of H'}, Proposition \ref{mixing KL} and Lemma \ref{modified expectation}, using the Talagrand inequality in Assumption \ref{assumption 6}, we have 
\begin{align}
    \mathbb{E}_{Y_k\sim\rho_k^f}[H_{\beta,c,1}^f(Y_k)]-H^* &\leq  \frac{2L^f}{T_a^f}\left(e^{-\frac{\eta k}{\beta C_{LSI}^f}} D_{KL}(\rho_0^f\|\pi_{\beta,c,1}^f)+8\eta d L^{f2}\cdot C_{LSI}^f/\beta\right)\label{with Ta} \\
    &\quad +3\frac{d\omega_d \Gamma(\frac{d}{2}+1)}{\sqrt{\det K}}\left(\frac{2L^f}{\pi}\right)^{d/2}\cdot\frac{1}{\beta}\nonumber\\
    &\quad +2\left(\frac{\beta L^f}{2\pi}\right)^{d/2}
    \left((c - H^* +\delta)\mu(H\leq c+\delta)e^{-\beta \Delta_r}+e^{-\beta (c-H^*)}\cdot I_3\right),\nonumber
\end{align}
where $T_a^f$ satisfies that for arbitrary distribution $\rho$, there exists
\begin{equation*}
    \frac{T_a^f}{2} W_2^2(\rho, \pi_{\beta,c,1}^f) \leq D_{KL}(\rho\|\pi_{\beta,c,1}^f).
\end{equation*}

Moreover, by Theorem 1 in \parencite{otto2000generalization}, we note that if a distribution $\nu$ satisfies Log-Sobolev inequality with the constant $C_{LSI}$, then it also satisfies Talagrand inequality with constant $T_a=\frac{1}{C_{LSI}}$. We thus substitute $\frac{1}{C_{LSI}^f}$ to $T_a^f$ in equation \eqref{with Ta} to obtain the desired result.
\end{proof}

We proceed to obtain upper bounds of $\mathbb{E}_{Y_k\sim\rho_k^f}[H(Y_k)]-H^*$ as specified in equation \eqref{excess risk under H}, and compare these results with classic results which give bounds of $\mathbb{E}_{X_k\sim\rho_k}[H(X_k)]-H^*$. In this context, we first quantify both the KL divergence and the Wassterstein distance between the original Gibbs distribution and the modified Gibbs:
\begin{lemma}\label{KL between two distributions}
Under Assumption \ref{assumption 1} to \ref{assumption 5}, \ref{assumption 6} and \ref{assumption integrable}, for given $0<r<\frac{3\gamma}{2L_1}$, and $x^*$ is the unique local minimum of $H$ in $B_r(x^*)$, suppose $\max\limits_{B_r(x^*)}H(x)-H^*=\delta_r\leq c-H^*$. Assume that $\min\limits_{B_r'(x^*)}H(x)-H^*=\Delta_r>0$, 
then we have 
\begin{equation*}
    D_{KL}(\pi_{\beta}^0\|\pi_{\beta,c,1}^f)\leq \left(\frac{\beta L}{2\pi}\right)^{d/2}\left(\mu(c<H\leq c+\delta)e^{-\beta (c-H^*)}+e^{-\beta (c-H^*)}I_1\right),
\end{equation*}
and
\begin{equation*}
    W_2^2(\pi_{\beta}^0,\pi_{\beta,c,1}^f)\leq 
    2C_{LSI}^f\left(\frac{\beta L}{2\pi}\right)^{d/2}\left(\mu(c<H\leq c+\delta)e^{-\beta (c-H^*)}+e^{-\beta (c-H^*)}I_1\right),
\end{equation*}
where
$C_{LSI}^f=\mathcal{O}\left(\beta M+\frac{\beta M^2}{H(m_2)-c} \right)$.
\end{lemma}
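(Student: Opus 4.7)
The plan is to reduce the KL bound to an estimate already obtained in the proof of Lemma \ref{total variation}, then to pass from KL to $W_2^2$ via the Otto--Villani route. First I would write the density ratio explicitly using $\pi_{\beta}^0 = e^{-\beta H}/Z_1$ and $\pi_{\beta,c,1}^f = e^{-\beta G}/Z_2$, giving
\begin{equation*}
D_{KL}(\pi_{\beta}^0 \| \pi_{\beta,c,1}^f) = \log(Z_2/Z_1) - \beta\,\mathbb{E}_{\pi_1}[H-G].
\end{equation*}
Since $G \le H$ pointwise (with equality on $\{H\le c\}$), the expectation term is non-negative, so $-\beta\,\mathbb{E}_{\pi_1}[H-G]\le 0$ and we are left with the upper bound $D_{KL}\le \log(Z_2/Z_1)$. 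Using $\log(1+t)\le t$ for $t\ge 0$ and $Z_2\ge Z_1$ (again from $G\le H$), this is further dominated by $(Z_2-Z_1)/Z_1$.

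Next I would estimate the numerator exactly as in the proof of Lemma \ref{total variation}: since $H=G$ on $\{H\le c\}$,
\begin{equation*}
Z_2 - Z_1 = \int_{H>c}\bigl(e^{-\beta G} - e^{-\beta H}\bigr)\,dx \le \int_{H>c} e^{-\beta G}\,dx.
\end{equation*}
Splitting this last integral over $\{c<H\le c+\delta\}$ and $\{H>c+\delta\}$, using $G\ge c$ on the first region and the lower bound $G(x)\ge c + \tfrac{1}{\beta}\log\tfrac{H-c}{2\delta}$ from \eqref{H and G} together with Assumption \ref{assumption integrable} on the second, gives
\begin{equation*}
Z_2 - Z_1 \le e^{-\beta c}\bigl(\mu(c<H\le c+\delta) + I_1\bigr).
\end{equation*}
For the denominator I would invoke the lower bound $Z_1 \ge e^{-\beta H^*}(2\pi/(\beta L))^{d/2}$ from equation \eqref{lower bound of Z1}, which uses $L$-smoothness of $H$. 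Dividing yields the desired KL bound with the factor $e^{-\beta(c-H^*)}$.

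For the Wasserstein part, I would apply Theorem 1 of \parencite{otto2000generalization} to $\pi_{\beta,c,1}^f$: since this measure satisfies LSI with constant $C_{LSI}^f$, it also satisfies the Talagrand inequality with constant $T_a^f = 1/C_{LSI}^f$. Taking $\rho=\pi_{\beta}^0$ in Assumption \ref{assumption 6} gives
\begin{equation*}
W_2^2(\pi_{\beta}^0,\pi_{\beta,c,1}^f) \le 2C_{LSI}^f\,D_{KL}(\pi_{\beta}^0\|\pi_{\beta,c,1}^f),
\end{equation*}
and substituting the KL bound produces the stated $W_2^2$ estimate. I do not anticipate a genuine technical obstacle here since the structural work of bounding $\int_{H>c}e^{-\beta G}dx$ and lower-bounding $Z_1$ was already done in Lemma \ref{total variation}; the only care needed is to ensure the sign of $\mathbb{E}_{\pi_1}[H-G]$ is exploited so that the $\log(Z_2/Z_1)$ term alone controls the KL divergence, and that the Otto--Villani constant $1/C_{LSI}^f$ is used correctly rather than an independently defined Talagrand constant $T_a^f$.
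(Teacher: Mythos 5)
Your proposal is correct and follows essentially the same route as the paper: the same reduction $D_{KL}\le \ln(Z_2/Z_1)\le (Z_2-Z_1)/Z_1$ using $G\le H$, the same splitting of $Z_2-Z_1$ over $\{c<H\le c+\delta\}$ and $\{H>c+\delta\}$ with the bound \eqref{H and G} and Assumption \ref{assumption integrable}, the same lower bound \eqref{lower bound of Z1} on $Z_1$, and the same Otto--Villani/Talagrand step for $W_2^2$. The only difference is cosmetic: the paper additionally computes an (unused) upper bound on $\int_{H\le c}e^{-\beta H}\,dx$ before cancelling that region, which your argument cleanly omits.
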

\begin{proof}
Recalling that $\pi_1=\pi_{\beta}^0\propto e^{-\beta H}$, $\pi_2=\pi_{\beta,c,1}^f\propto e^{-\beta G}$ and $G\leq H$,  we have 
\begin{align*}
    D_{KL}(\pi_1\|\pi_2)&=\int_{\mathbb{R}^d} \pi_1\ln\left(\frac{\pi_1}{\pi_2}\right)dx\\
    &=\int_{\mathbb{R}^d} \frac{e^{-\beta H}}{Z_1}\ln\left(\frac{e^{-\beta H}/Z_1}{e^{-\beta G}/Z_2}\right)dx\\
    &=\ln \left(\frac{Z_2}{Z_1}\right)-\beta\int_{\mathbb{R}^d} (H-G)\frac{e^{-\beta H}}{Z_1}dx\\
    &\leq \ln\left(\frac{Z_2}{Z_1}\right)\leq \frac{Z_2-Z_1}{Z_1},
\end{align*}
where we have utilized $Z_2/Z_1 \geq 1$ in the last inequality. Now, we observe that both $Z_1$ and $Z_2$ can be decomposed into
\begin{align*}
    Z_1&=\int_{H\leq c}e^{-\beta H}dx+\int_{H>c}e^{-\beta H}dx,\\
    Z_2&=\int_{H\leq c}e^{-\beta G}dx+\int_{c<H\leq c+\delta}e^{-\beta G}dx+\int_{H>c+\delta}e^{-\beta G}dx.
\end{align*}
For the first term of the equations above, as $r<\frac{3\gamma}{2L_1}$,
\begin{align*}
    \int_{H\leq c}e^{-\beta G}dx=\int_{H\leq c}e^{-\beta H}dx
    &=\int_{B_r(x^*)} e^{-\beta H}dx+\int_{B_r'(x^*)\cap\{H\leq c\}}e^{-\beta H}dx\\
    &\leq e^{-\beta H^*} \left(\int_{B_r(x^*)}\exp\left(-\frac{\beta}{4}\|x-x^*\|_K^2\right)dx+\mu(H\leq c)e^{-\beta \Delta_r}\right)\\
    &\leq e^{-\beta H^*} \left(\int_{\mathbb{R}^d}\exp\left(-\frac{\beta}{4}\|x-x^*\|_K^2\right)dx+\mu(H\leq c)e^{-\beta \Delta_r}\right)\\
    &= e^{-\beta H^*} \left(\frac{d\omega_d}{\sqrt{\det K}}\int_{0}^{\infty}r^{d-1}e^{-\frac{\beta}{4}r^2}dr+\mu(H\leq c)e^{-\beta \Delta_r}\right)\\
    &=e^{-\beta H^*} \left(\frac{1}{2}\frac{d\omega_d \Gamma(\frac{d}{2})}{\sqrt{\det K}}\left(\frac{4}{\beta}\right)^{d/2}+
    \mu(H\leq c)e^{-\beta \Delta_r}\right),
\end{align*}
 which gives the upper bound of $\int_{H\leq c}e^{-\beta H}\,dx$.
 As for the lower bound of $Z_1$, by equation \eqref{lower bound of Z1} we have
\begin{equation*}
    Z_1=\int_{\mathbb{R}^d} e^{-\beta H}dx\geq e^{-\beta H^*}\left(\frac{2\pi}{\beta L}\right)^{d/2}.
\end{equation*}
Moreover, we note that 
\begin{align*}
    \int_{c<H\leq c+\delta}e^{-\beta G}dx+\int_{H>c+\delta}e^{-\beta G}dx
    &\leq \mu(c<H\leq c+\delta)e^{-\beta c}+e^{-\beta c}\int_{H>c+\delta}\frac{2\delta}{H-c}dx\\
    &\leq \mu(c<H\leq c+\delta)e^{-\beta c}+e^{-\beta c}I_1.
\end{align*}
Collecting the above results yields
\begin{align*}
    D_{KL}(\pi_1\|\pi_2)\leq \frac{Z_2-Z_1}{Z_1}&\leq 
    \frac{\int_{c<H\leq c+\delta}e^{-\beta G}dx+\int_{H>c+\delta}e^{-\beta G}dx}{Z_1}\\
    &\leq \left(\frac{\beta L}{2\pi}\right)^{d/2}\left(\mu(c<H\leq c+\delta)e^{-\beta (c-H^*)}+e^{-\beta (c-H^*)}I_1\right)\\
    &=\mathcal{O}\left(\beta^{d/2}e^{-\beta (c-H^*)}\right).
\end{align*}

Using the Talagrand inequality again, and note that $T_a^f$ can be substituted with $\frac{1}{C_{LSI}^f}$, the case for the Wasserstein distance can be obtained analogously.
\end{proof}

With Lemma \ref{H and gradient H} and \ref{KL between two distributions}, we bound the expectation of $H(Y_k)$.

\begin{proof}[Proof of Theorem \ref{result of expectation under H}]
    Plugging $\nu_1=\rho_k^f$ and $\nu_2=\pi_{\beta}^0$ into Lemma \ref{expectation into W2}, we have
\begin{align}
    \mathbb{E}_{Y_k\sim\rho_k^f}[H(Y_k)]-H^*&= \mathbb{E}_{Y_k\sim\rho_k^f}[H(Y_k)]-\mathbb{E}_{X\sim\pi_1}[H(X)]+\mathbb{E}_{X\sim\pi_1}[H(X)]-H^*\nonumber\\
    &\leq LW_2^2(\rho_k^f,\pi_{\beta}^0)+2\left(\mathbb{E}_{X\sim\pi_1}[H(X)]-H^*\right).\label{decomposition of original expectation}
\end{align}

By Proposition \ref{mixing KL} and Lemma \ref{KL between two distributions}, using the Talagrand inequality with respect to $\pi_{\beta,c,1}^f$, we have
\begin{align*}
    W_2^2(\rho_k^f,\pi_{\beta}^0) &\leq 2\left(W_2^2(\rho_k^f,\pi_{\beta,c,1}^f)^2+W_2^2(\pi_{\beta}^0,\pi_{\beta,c,1}^f)^2\right) \\
    &\leq 4C_{LSI}^f\left(e^{-\frac{\eta k}{\beta C_{LSI}^f}} D_{KL}(\rho_0^f\|\pi_{\beta,c,1}^f)+8\eta d L^{f2}\cdot C_{LSI}^f/\beta\right)\\
    &\quad +4C_{LSI}^f\left(\frac{\beta L}{2\pi}\right)^{d/2}\left(\mu(c<H\leq c+\delta)e^{-\beta (c-H^*)}+e^{-\beta (c-H^*)}I_1\right).
\end{align*}

As for $\mathbb{E}_{X\sim\pi_1}[H(X)]$, the computation is similar to that as presented in the proof of Lemma \ref{modified expectation}. Precisely, it can be calculated that 
\begin{align*}
    \mathbb{E}_{X\sim\pi_1}[H(X)]-H^*
    &= \frac{1}{Z_1}\left(\int_{H\leq c} (H-H^*) e^{-\beta H}dx+\int_{H>c} (H-H^*)e^{-\beta H}dx\right)\\
    &\leq \frac{3}{2}\frac{d\omega_d \Gamma(\frac{d}{2}+1)}{\sqrt{\det K}}\left(\frac{2L}{\pi}\right)^{d/2}\cdot\frac{1}{\beta}\\
    &\quad +\left(\frac{\beta L}{2\pi}\right)^{d/2}\left((c-H^*)\mu(H\leq c)e^{-\beta \Delta_r}
    +\int_{H>c} (H-H^*)e^{-\beta (H-H^*)}dx\right)\\
    &\leq \frac{3}{2}\frac{d\omega_d \Gamma(\frac{d}{2}+1)}{\sqrt{\det K}}\left(\frac{2L}{\pi}\right)^{d/2}\cdot\frac{1}{\beta}
    \\
    &\quad +\left(\frac{\beta L}{2\pi}\right)^{d/2}\left((c-H^*)\mu(H\leq c)e^{-\beta \Delta_r}+e^{-(\beta-1)(c-H^*)}I_4\right).
\end{align*}

The desired result follows via plugging the above into equation \eqref{decomposition of original expectation}.
\end{proof}

\section{Table of commonly used notations}\label{appendix A}

In Table \ref{tab:notation}, we summarize important notations that are frequently used in this paper.
\begin{table}[ht]
    \centering
    \caption{Table of commonly used notations}
    \begin{tabular}{c|c|c}
    \hline
        Notation & Description & Definition \\
    \hline
        $G(x)$ & modified objective function & $G(x)=H_{\beta,c,1}^f(x)$\\
        $X_k$ & $k^{th}$ iteration of original LMC & \\
        $Y_k$ & $k^{th}$ iteration of modified LMC & \\
        $\rho_k$ & distribution of $X_k$ & $X_k\sim \rho_k$\\
        $\rho_k^f$ & distribution of $Y_k$ & $Y_k\sim \rho_k^f$\\
        $\pi_{\beta}^0$ & Gibbs distribution of original dynamics & $\pi_{\beta}^0\propto e^{-\beta H}$\\
        $\pi_{\beta,c,1}^f$ & Gibbs distribution of modified dynamics & 
        $\pi_{\beta,c,1}^f\propto e^{-\beta G}$\\
        $\pi_1$ & shorthand notation of $\pi_{\beta}^0$ & $\pi_1=\pi_{\beta}^0$\\
        $\pi_2$ & shorthand notation of $\pi_{\beta,c,1}^f$ & $\pi_2=\pi_{\beta,c,1}^f$\\
        $Z_1$ & normalization constant of $\pi_1$ & $Z_1=\int e^{-\beta H}dx$\\
        $Z_2$ & normalization constant of $\pi_2$ & $Z_2=\int e^{-\beta G}dx$\\
        $C_{PI}$ & Poincar\'e constant of $\pi_1$ & \\
        $C_{LSI}$ & Log-Sobolev constant of $\pi_1$ & \\
        $C_{PI}^f$ & Poincar\'e constant of $\pi_2$ & \\
        $C_{LSI}^f$ & Log-Sobolev constant of $\pi_2$ & \\
        $L$ & $H$ is $L$-smooth & $\|\nabla H(x)-\nabla H(y)\|\leq L\|x-y\|$ \\
        $L^f$ & $G$ is $L^f$-smooth & $\|\nabla G(x)-\nabla G(y)\|\leq L^f\|x-y\|$\\
        $L_1$ & $\nabla^2 H$ is $L_1$-Lipschitz & $\|\nabla^2 H(x)-\nabla^2 H(y)\|\leq L_1\|x-y\|$ \\
        $T_a$ & Talagrand constant of $\pi_1$ & \\
        $T_a^f$ & Talagrand constant of $\pi_2$ & \\
        $\mathcal{S}=\{m_k\}_{k=1}^S$ & local minima of $H$ & $H(m_1)<H(m_2)\leq H(m_i), 3\leq i\leq S$\\
        $\mathcal{S}^f=\{m_k^f\}_{k=1}^{S^f}$ & local minima of $G$ & It is proved that $m_k^f=m_k$, $\forall k$\\
        $s_{1,2}$ & saddle point between $m_1$ and $m_2$ & \\
        $s_{1,2}^f$ & saddle point between $m_1^f$ and $m_2^f$ & It is proved that $s_{1,2}^f=s_{1,2}$ and $S=S^f$ \\
        $x^*$ & the unique global minimum of $H$ (and $G$) & $x^*=m_1$\\
        $H^*$ & global minimum value of $H$ & $H^*=H(x^*)$\\
        $M$ & energy barrier of $H$ & $M=H(s_{1,2})-H(m_2)$\\
        $M^f$ & energy barrier of $G$ & $M^f=G(s_{1,2}')-G(m_2')$\\
        $K$ & Hessian of $H$ at $m_1$ & $K=\nabla^2 H(m_1)$\\
        $\gamma$ & the smallest eigenvalue of $\nabla^2 H(m_1)$ & $\gamma=\lambda_{min}(K)$\\
        $\|x\|_A$ & matrix norm of $x$ with respect to matrix $A$ & $\|x\|_A^2=\frac{1}{2}x^TAx$\\
        $\|B\|$ & operator norm of matrix $B$ & \\
        $\omega_d$ & volume of the unit ball in $\mathbb{R}^d$ & \\
        $\mu$ & Lebesgue measure in $\mathbb{R}^d$ & \\
        $\Pi(\rho,\nu)$ & the set of possible couplings between $\rho$ and $\nu$ & \\
        $D_{KL}(\rho\|\nu)$ & KL divergence of $\rho$ with respect to $\nu$ & $D_{KL}(\rho\|\nu)=\int \rho \ln \frac{\rho}{\nu}dx$\\
        $D_{TV}(\rho,\nu)$ & Total variation distance between $\rho$ and $\nu$ & $D_{TV}(\rho,\nu)=\frac{1}{2}\int |\rho-\nu|dx$ \\
        $W_2(\rho,\nu)$ & Wasserstein distance between $\rho$ and $\nu$ & $W_2^2(\rho,\nu)=\inf\limits_{\xi\in\Pi(\rho,\nu)}\int\|x-y\|^2\xi(dx,dy)$
        \\
    \hline
    \end{tabular}
    \label{tab:notation}
\end{table}
\end{document}